\numberwithin{equation}{section}
\newtheorem{theorem}{Theorem}[section]
\newtheorem{proposition}[theorem]{Proposition}
\theoremstyle{remark}
\theoremstyle{definition}
\title{Colored stochastic vertex models with U-turn boundary}
\author{Chenyang Zhong \thanks{Department of Statistics, Columbia University}}
\date{\today}
\begin{document}
\maketitle

\newcommand{\caps}[2]{\begin{tikzpicture}
  \draw[->] (0,-0.5) arc(-90:90:0.5);
  \filldraw[black] (0.5,0) circle (2pt);
  \node at (0,-0.5) [anchor=east] {$#1$};
  \node at (0.5,0) [anchor=west] {$x$};
  \node at (0,0.5) [anchor=east] {$#2$};
\end{tikzpicture}}

\newcommand{\capsN}[2]{\begin{tikzpicture}
  \draw[->] (0,-0.5) arc(-90:90:0.5);
  \filldraw[black] (0.5,0) circle (2pt);
  \node at (0,-0.5) [anchor=east] {$#1$};
  \node at (0.5,0) [anchor=west] {$x_n$};
  \node at (0,0.5) [anchor=east] {$#2$};
\end{tikzpicture}}
\newcommand{\capsm}[2]{\begin{tikzpicture}
  \draw[->] (0,-0.5) arc(-90:90:0.5);
  \filldraw[black] (0.5,0) circle (2pt);
  \node at (0,-0.5) [anchor=east] {$#1$};
  \node at (0.5,0) [anchor=west] {$x_n^{-1}$};
  \node at (0,0.5) [anchor=east] {$#2$};
\end{tikzpicture}}

\newcommand{\capsC}[3]{\begin{tikzpicture}
   \draw (0,-0.5) to [out = 0, in = 180] (0.5,0);
  \draw[<-] (0,0.5) to [out = 0, in = 180] (0.5,0);
  \filldraw[black] (0.5,0) circle (2pt);
  \node at (0,-0.5) [anchor=east] {$#2$};
  \node at (0,0.5) [anchor=east] {$#1$};
  \node at (0.5,0)   [anchor=west] {$#3$};
\end{tikzpicture}}

\newcommand{\newcaps}[2]{\begin{tikzpicture}
  \draw (0,-0.5) to [out = 0, in = 180] (0.5,0);
  \draw[<-] (0,0.5) to [out = 0, in = 180] (0.5,0);
  \filldraw[black] (0.5,0) circle (2pt);
  \node at (0,-0.5) [anchor=east] {$#2$};
  \node at (0.5,0) [anchor=west] {$x_n$};
  \node at (0,0.5) [anchor=east] {$#1$};
\end{tikzpicture}}

\newcommand{\gammaicen}[4]{\begin{tikzpicture}
\coordinate (a) at (-.75, 0);
\coordinate (b) at (0, .75);
\coordinate (c) at (.75, 0);
\coordinate (d) at (0, -.75);
\coordinate (aa) at (-.75,.5);
\coordinate (cc) at (.75,.5);
\draw (a)--(c);
\draw (b)--(d);
\draw[fill=white] (a) circle (.25);
\draw[fill=white] (b) circle (.25);
\draw[fill=white] (c) circle (.25);
\draw[fill=white] (d) circle (.25);
\node at (0,1) { };
\node at (a) {$#2$};
\node at (b) {$#3$};
\node at (c) {$#4$};
\node at (d) {$#1$};
\path[fill=white] (0,0) circle (.2);
\node at (0,0) {$x_n$};
\end{tikzpicture}}

\newcommand{\gammaicenm}[4]{\begin{tikzpicture}
\coordinate (a) at (-.75, 0);
\coordinate (b) at (0, .75);
\coordinate (c) at (.75, 0);
\coordinate (d) at (0, -.75);
\coordinate (aa) at (-.75,.5);
\coordinate (cc) at (.75,.5);
\draw (a)--(c);
\draw (b)--(d);
\draw[fill=white] (a) circle (.25);
\draw[fill=white] (b) circle (.25);
\draw[fill=white] (c) circle (.25);
\draw[fill=white] (d) circle (.25);
\node at (0,1) { };
\node at (a) {$#2$};
\node at (b) {$#3$};
\node at (c) {$#4$};
\node at (d) {$#1$};
\path[fill=white] (0,0) circle (.2);
\node at (0,0) {$x_n$};
\end{tikzpicture}}

\newcommand{\gammaicenn}[4]{\begin{tikzpicture}
\coordinate (a) at (-.75, 0);
\coordinate (b) at (0, .75);
\coordinate (c) at (.75, 0);
\coordinate (d) at (0, -.75);
\coordinate (aa) at (-.75,.5);
\coordinate (cc) at (.75,.5);
\draw (a)--(c);
\draw (b)--(d);
\draw[fill=white] (a) circle (.25);
\draw[fill=white] (b) circle (.25);
\draw[fill=white] (c) circle (.25);
\draw[fill=white] (d) circle (.25);
\node at (0,1) { };
\node at (a) {$#2$};
\node at (b) {$#3$};
\node at (c) {$#4$};
\node at (d) {$#1$};
\path[fill=white] (0,0) circle (.2);
\node at (0,0) {$x_n^{-1}$};
\end{tikzpicture}}

\newcommand{\gammaicei}[4]{\begin{tikzpicture}
\coordinate (a) at (-.75, 0);
\coordinate (b) at (0, .75);
\coordinate (c) at (.75, 0);
\coordinate (d) at (0, -.75);
\coordinate (aa) at (-.75,.5);
\coordinate (cc) at (.75,.5);
\draw (a)--(c);
\draw (b)--(d);
\draw[fill=white] (a) circle (.25);
\draw[fill=white] (b) circle (.25);
\draw[fill=white] (c) circle (.25);
\draw[fill=white] (d) circle (.25);
\node at (0,1) { };
\node at (a) {$#1$};
\node at (b) {$#2$};
\node at (c) {$#3$};
\node at (d) {$#4$};
\end{tikzpicture}}

\newcommand{\gammaice}[4]{\begin{tikzpicture}
\coordinate (a) at (-.8, 0);
\coordinate (b) at (0, .8);
\coordinate (c) at (.8, 0);
\coordinate (d) at (0, -.8);
\draw[->] (-0.6,0)--(0.6,0);
\draw[->,line width=0.5mm] (0,-0.6)--(0,0.6);
\node at (0,1) { };
\node at (a) {$#2$};
\node at (b) {$#3$};
\node at (c) {$#4$};
\node at (d) {$#1$};
\path[fill=white] (0,0) circle (.2);
\node at (0,0) {$x$};
\end{tikzpicture}}

\newcommand{\deltaice}[4]{\begin{tikzpicture}
\coordinate (a) at (-.8, 0);
\coordinate (b) at (0, .8);
\coordinate (c) at (.8, 0);
\coordinate (d) at (0, -.8);
\draw[<-] (-0.6,0)--(0.6,0);
\draw[->,line width=0.5mm] (0,-0.6)--(0,0.6);
\node at (0,1) { };
\node at (a) {$#4$};
\node at (b) {$#3$};
\node at (c) {$#2$};
\node at (d) {$#1$};
\path[fill=white] (0,0) circle (.2);
\node at (0,0) {$x$};
\end{tikzpicture}}

\newcommand{\gammaa}{\begin{tikzpicture}
\coordinate (a) at (-.75, 0);
\coordinate (b) at (0, .75);
\coordinate (c) at (.75, 0);
\coordinate (d) at (0, -.75);
\coordinate (aa) at (-.75,.5);
\coordinate (cc) at (.75,.5);
\draw (a)--(c);
\draw (b)--(d);
\draw[fill=white] (a) circle (.25);
\draw[fill=white] (b) circle (.25);
\draw[fill=white] (c) circle (.25);
\draw[fill=white] (d) circle (.25);
\node at (0,1) { };
\node at (a) {$+$};
\node at (b) {$+$};
\node at (c) {$+$};
\node at (d) {$+$};
\path[fill=white] (0,0) circle (.2);
\node at (0,0) {$z_i$};
\end{tikzpicture}}

\newcommand{\gammaA}[8]{\begin{tikzpicture}
\coordinate (a) at (-.75, 0);
\coordinate (b) at (0, .75);
\coordinate (c) at (.75, 0);
\coordinate (d) at (0, -.75);
\coordinate (aa) at (-.75,.5);
\coordinate (cc) at (.75,.5);
\draw[line width=0.5mm, #1] (a)--(0,0);
\draw[line width=0.6mm, #2] (b)--(0,0);
\draw[line width=0.5mm, #3] (c)--(0,0);
\draw[line width=0.6mm, #4] (d)--(0,0);
\draw[line width=0.5mm, #1,fill=white] (a) circle (.25);
\draw[line width=0.5mm, #2,fill=white] (b) circle (.25);
\draw[line width=0.5mm, #3,fill=white] (c) circle (.25);
\draw[line width=0.5mm, #4,fill=white] (d) circle (.25);
\node at (0,1) { };
\node at (a) {$#5$};
\node at (b) {$#6$};
\node at (c) {$#7$};
\node at (d) {$#8$};
\path[fill=white] (0,0) circle (.2);
\node at (0,0) {$z_i$};
\end{tikzpicture}}

\newcommand{\gammaAA}[4]{\begin{tikzpicture}
\coordinate (a) at (-.75, 0);
\coordinate (b) at (0, .75);
\coordinate (c) at (.75, 0);
\coordinate (d) at (0, -.75);
\coordinate (aa) at (-.75,.5);
\coordinate (cc) at (.75,.5);
\draw (a)--(c);
\draw (b)--(d);
\draw[line width=0.5mm, #1,fill=white] (a) circle (.25);
\draw[line width=0.5mm, #2,fill=white] (b) circle (.25);
\draw[line width=0.5mm, #2,fill=white] (c) circle (.25);
\draw[line width=0.5mm, #1,fill=white] (d) circle (.25);
\node at (0,1) { };
\node at (a) {$#3$};
\node at (b) {$#4$};
\node at (c) {$#4$};
\node at (d) {$#3$};
\path[fill=white] (0,0) circle (.2);
\node at (0,0) {$z_i$};
\draw [line width=0.5mm,#1] (-.5,0) to [out=0,in=90] (0,-.5);
\draw [line width=0.5mm,#2] (0,.5) to [out=-90,in=180] (.5,0);
\end{tikzpicture}}

\newcommand{\gammab}[2]{\begin{tikzpicture}
\coordinate (a) at (-.75, 0);
\coordinate (b) at (0, .75);
\coordinate (c) at (.75, 0);
\coordinate (d) at (0, -.75);
\coordinate (aa) at (-.75,.5);
\coordinate (cc) at (.75,.5);
\draw (a)--(c);
\draw [line width=0.6mm, #1] (b)--(d);
\draw[fill=white] (a) circle (.25);
\draw[line width=0.5mm, #1, fill=white] (b) circle (.25);
\draw[fill=white] (c) circle (.25);
\draw[line width=0.5mm, #1, fill=white] (d) circle (.25);
\node at (0,1) { };
\node at (a) {$+$};
\node at (b) {$#2$};
\node at (c) {$+$};
\node at (d) {$#2$};
\path[fill=white] (0,0) circle (.2);
\node at (0,0) {$z_i$};
\end{tikzpicture}}

\newcommand{\gammaB}[2]{\begin{tikzpicture}
\coordinate (a) at (-.75, 0);
\coordinate (b) at (0, .75);
\coordinate (c) at (.75, 0);
\coordinate (d) at (0, -.75);
\coordinate (aa) at (-.75,.5);
\coordinate (cc) at (.75,.5);
\draw [line width=0.5mm,#1] (a)--(c);
\draw (b)--(d);
\draw[line width=0.5mm, #1, fill=white] (a) circle (.25);
\draw[fill=white] (b) circle (.25);
\draw[line width=0.5mm, #1, fill=white] (c) circle (.25);
\draw[fill=white] (d) circle (.25);
\node at (0,1) { };
\node at (a) {$#2$};
\node at (b) {$+$};
\node at (c) {$#2$};
\node at (d) {$+$};
\path[fill=white] (0,0) circle (.2);
\node at (0,0) {$z_i$};
\end{tikzpicture}}

\newcommand{\gammac}[2]{\begin{tikzpicture}
\coordinate (a) at (-.75, 0);
\coordinate (b) at (0, .75);
\coordinate (c) at (.75, 0);
\coordinate (d) at (0, -.75);
\coordinate (aa) at (-.75,.5);
\coordinate (cc) at (.75,.5);
\draw (a)--(c);
\draw (b)--(d);
\draw[line width=0.5mm, #1, fill=white] (a) circle (.25);
\draw[fill=white] (b) circle (.25);
\draw[fill=white] (c) circle (.25);
\draw[line width=0.5mm, #1, fill=white] (d) circle (.25);
\node at (0,1) { };
\node at (a) {$#2$};
\node at (b) {$+$};
\node at (c) {$+$};
\node at (d) {$#2$};
\path[fill=white] (0,0) circle (.2);
\node at (0,0) {$z_i$};
\draw [line width=0.5mm,#1] (-.5,0) to [out=0,in=90] (0,-.5);
\end{tikzpicture}}

\newcommand{\gammaC}[2]{\begin{tikzpicture}
\coordinate (a) at (-.75, 0);
\coordinate (b) at (0, .75);
\coordinate (c) at (.75, 0);
\coordinate (d) at (0, -.75);
\coordinate (aa) at (-.75,.5);
\coordinate (cc) at (.75,.5);
\draw (a)--(c);
\draw (b)--(d);
\draw[fill=white] (a) circle (.25);
\draw[line width=0.5mm, #1, fill=white] (b) circle (.25);
\draw[line width=0.5mm, #1, fill=white] (c) circle (.25);
\draw[fill=white] (d) circle (.25);
\node at (0,1) { };
\node at (a) {$+$};
\node at (b) {$#2$};
\node at (c) {$#2$};
\node at (d) {$+$};
\path[fill=white] (0,0) circle (.2);
\node at (0,0) {$z_i$};
\draw [line width=0.5mm,#1] (0,.5) to [out=-90,in=180] (.5,0);
\end{tikzpicture}}

\begin{abstract}
In this paper, we introduce a class of colored stochastic vertex models with U-turn right boundary. The vertex weights in the models satisfy the Yang-Baxter equations and the reflection equation. Based on these equations, we derive recursive relations for partition functions of the models.
\end{abstract}

\section{Introduction}\label{Sect.1}

Exactly solvable lattice models, since the investigations by Baxter (\cite{Bax2,Bax1}), have been applied to various fields of mathematics and physics. For example, these models have found applications in algebraic combinatorics (e.g. \cite{BBBG,BSW,Kup,Kup2,WZ}), integrable probability (e.g. \cite{MR3466163,BP,BW,CP,OP}), and quantum field theory (e.g. \cite{BPZ,DMS}). 

Stochastic vertex models form an important class of exactly solvable lattice models. This class of models involve stochastic vertex weights, and play an important role in analyzing large scale and long time asymptotic behavior of Markovian systems, prominent examples of which include the Asymmetric Simple Exclusion Process (ASEP) and the Kardar-Parisi-Zhang equation. Among such models, the stochastic six-vertex model is introduced in \cite{MR1147356}, and its asymptotic properties are analyzed in \cite{MR3466163}. As a generalization of the stochastic six-vertex model, the stochastic higher spin vertex model is introduced and analyzed in \cite{Bor,BP,MR4283759,CP,MR4126935,OP}. The stochastic higher spin vertex model unifies many integrable models in nonequilibrium statistical mechanics, including the ASEP and directed polymers in random media.

More recently, an additional attribute called ``color'' was introduced to stochastic vertex models \cite{BW,MR3584389,kuniba2016stochastic}. These colored models are related to the quantum group $U_q(\widehat{\mathfrak{sl}_{n+1}})$, and can be interpreted as ensembles of colored paths on a rectangular lattice. Under specialization of parameters, the models degenerate to multi-species interacting particle systems such as multi-species ASEP. An extensive algebraic analysis of these models are carried out in \cite{BW}, and integral representations for natural observables of the models are obtained in \cite{MR4408007,MR4299137}. We refer to e.g. \cite{MR4260463,MR4400934,MR4317703,he2022shift} for further developments, including shift invariance and extension to half space models. 

In this paper, we introduce a class of colored stochastic vertex models with U-turn right boundary. The models can be interpreted as a collection of paths carrying ``signed colors''. Under the models, colored paths alternately bend to the right and to the left on the rectangular lattice, and have nontrivial interactions with the U-turn boundary (after passing through the U-turn, the signed color that a path carries may stay the same or change to the opposite sign). The partition functions of the models satisfy explicit recursive relations, which can be further related to the Noumi representation for affine Hecke algebra of type $\tilde{C}_n$. 

The rest of this paper is organized as follows. In Section \ref{Sect.2}, we describe the colored stochastic vertex model with U-turn boundary. We also present the Yang-Baxter equations and the reflection equation that are satisfied by the vertex weights involved in the model in this section. In Section \ref{Sect.3}, we use the Yang-Baxter equations and the reflection equation to derive recursive relations for partition functions of the models.

\subsection{Acknowledgement}

The author wishes to thank Daniel Bump and Jimmy He for helpful conversations and comments.

\section{Colored stochastic vertex model with U-turn boundary}\label{Sect.2}

In this section, we introduce colored stochastic vertex model with U-turn boundary. The model is described in Section \ref{Sect.2.1}. Then in Section \ref{Sect.2.2}, we introduce three types of $R$ vertices. The vertices involved in the model and the $R$ vertices satisfy three sets of Yang-Baxter equations and the reflection equation, which we introduce in Section \ref{Sect.2.3}. Throughout this section, we fix $n,L\in \mathbb{N}^{*}$.

\subsection{The model}\label{Sect.2.1}

We introduce $2n$ ``signed colors'' $1,\cdots,n,\overline{n},\cdots,\overline{1}$, and denote by $[\pm n]$ the set of these colors. We denote by $[n]:=\{1,\cdots,n\}$ the set of positive colors and $[\overline{n}]:=\{\overline{1},\cdots,\overline{n}\}$ the set of negative colors. We also introduce the color $0$ and identify it with the $+$ spin in the sequel. The ordering of the colors is taken to be $1<\cdots<n<0<\overline{n}<\cdots<\overline{1}$. The hyperoctahedral group $B_n$ is the set of signed permutations, that is, permutations $\sigma$ of $[\pm n]$ such that $\sigma(\overline{i})=\overline{\sigma(i)}$ for any $i \in[n]$ (where $\overline{\overline{i}}:=i$ for any $i\in [n]$).

In the following, we introduce the colored stochastic vertex model with U-turn boundary. The model depends on the parameters $q,\nu,t\in \mathbb{C}$, $s\in\mathbb{C}^{*}$, and $\bm{x}=(x_1,\cdots,x_n)\in \mathbb{C}^n$, where $s$ is the spin parameter and $x_1,\cdots,x_n$ are the spectral parameters. 

The model is based on a rectangular lattice with $2n$ rows and $L$ columns, where the rows are labeled $1,2,\cdots,2n$ from top to bottom, and the columns are labeled $1,2,\cdots,L$ from right to left. Each horizontal edge in the lattice is assigned a $+$ spin (which we identify with the color $0$) or a color from $[\pm n]$. Each vertical edge in the lattice is associated with a vector $\bm{I}=(I_1,\cdots,I_n,I_{\overline{n}},\cdots,I_{\overline{1}})$, where $I_i\in \mathbb{N}$ for any $i\in [\pm n]$. 

In the rest of this paper, we denote the standard basis of $\mathbb{C}^{2n}$ by $\bm{e}_1,\cdots,\bm{e}_n,\bm{e}_{\overline{n}},\cdots,\bm{e}_{\overline{1}}$. We also define $\bm{e}_0:=(0,\cdots,0)\in \mathbb{C}^{2n}$. For any $\bm{I}=(I_1,\cdots,I_n,I_{\overline{n}},\cdots,I_{\overline{1}})\in\mathbb{N}^{2n}$ and any $j,j'\in \{0\}\cup [\pm n]$, we define $\bm{I}_{[j,j']}:=\sum\limits_{k\in [\pm n]: j\leq k\leq j'} I_{k}$, and define $\bm{I}_{<j},\bm{I}_{> j},\bm{I}_{\leq j},\bm{I}_{\geq j}$ similarly.

To each vertex in the rectangular lattice, we associate a Boltzmann weight, which depends on the type of the vertex ($\Gamma$ or $\Delta$ vertex, see below) and the colors and vectors assigned to the horizontal and vertical edges that are adjacent to the vertex. A $\Gamma$ vertex with spectral parameter $x\in\mathbb{C}$ and adjacent edges given as in Figure \ref{figure2.1} is labeled by $(\bm{I},j,\bm{K},\ell)$ (where $j,\ell\in \{0\}\cup [\pm n]$ and $\bm{I}=(I_1,\cdots,I_n,I_{\overline{n}},\cdots,I_{\overline{1}}),\bm{K}=(K_1,\cdots,K_n,K_{\overline{n}},\cdots,K_{\overline{1}})\in \mathbb{N}^{2n}$), and its Boltzmann weight $\mathcal{L}_x(\bm{I},j,\bm{K},\ell)$ is given as in Table \ref{table2.1} if $\bm{I}+\bm{e}_j=\bm{K}+\bm{e}_{\ell}$ (otherwise $\mathcal{L}_x(\bm{I},j,\bm{K},\ell)=0$). A $\Delta$ vertex with spectral parameter $x\in\mathbb{C}$ and adjacent edges given as in Figure \ref{figure2.2} is labeled by $(\bm{I},j,\bm{K},\ell)$ (where $j,\ell\in \{0\}\cup [\pm n]$ and $\bm{I}=(I_1,\cdots,I_n,I_{\overline{n}},\cdots,I_{\overline{1}}),\bm{K}=(K_1,\cdots,K_n,K_{\overline{n}},\cdots,K_{\overline{1}})\in \mathbb{N}^{2n}$), and its Boltzmann weight $\mathcal{M}_x(\bm{I},j,\bm{K},\ell)$ is given as in Table \ref{table2.2} if $\bm{I}+\bm{e}_j=\bm{K}+\bm{e}_{\ell}$ (otherwise $\mathcal{M}_x(\bm{I},j,\bm{K},\ell)=0$). Each even numbered row in the lattice is a row of $\Gamma$ vertices, and each odd numbered row is a row of $\Delta$ vertices. For each $i\in [n]$, the spectral parameter for vertices in the $(2i-1)$th row and the $2i$th row is given by $x_i$. For each $i\in [n]$, the $(2i-1)$th row and the $2i$th row is connected by a ``cap vertex'' with spectral parameter $x_i$ on the right. The Boltzmann weights for a cap vertex with spectral parameter $x\in \mathbb{C}$ is given in Figure \ref{figure2.3}, where 
\begin{equation*}
    \phi(x):=\frac{1-x^2}{(1-\nu t x)(1+\nu^{-1} x)}.
\end{equation*}

\begin{figure}[!ht]
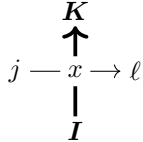

\centering
\gammaice{\bm{I}}{j}{\bm{K}}{\ell}
\caption{$\Gamma$ vertex with spectral parameter $x$}
\label{figure2.1}
\end{figure}

\begin{table}[!ht]
\centering
\begin{tabular}{|c|c|c|c|} 
 \hline
 Condition & $j=\ell, \ell=0$  &  $j=\ell,\ell\in [n]$  &  $j=\ell,\ell\in [\overline{n}]$\\
 \hline
 Boltzmann weight & $\frac{q^{-\boldsymbol{I}_{[1,n]}}-sx q^{\boldsymbol{I}_{[\overline{n},\overline{1}]}}}{1-sx}$ & $\frac{(sq^{\boldsymbol{I}_l}-x)q^{-\boldsymbol{I}_{\leq \ell}}}{s(1-sx)}$ & $\frac{s(s q^{\boldsymbol{I}_{\ell}}-x) q^{\boldsymbol{I}_{>\ell}}}{1-sx}$ \\
 \hline
 Condition & $j<\ell, \ell=0$ &  $j<\ell,\ell\in [n]$   & $j<\ell,\ell\in [\overline{n}]$   \\
 \hline
 Boltzmann weight & $\frac{xq^{-\boldsymbol{I}_{[1,n]}}-s^2xq^{\boldsymbol{I}_{[\overline{n},\overline{1}]}}}{s(1-sx)}$ & $\frac{-x(1-q^{\boldsymbol{I}_{\ell}})q^{-\boldsymbol{I}_{\leq \ell}}}{s(1-sx)}$ & $\frac{-sx(1-q^{\boldsymbol{I}_{\ell}})q^{\boldsymbol{I}_{>\ell}}}{1-sx}$ \\
 \hline
 Condition & $j>\ell, \ell=0$ & $j>\ell,\ell\in [n]$ & $j>\ell,\ell\in [\overline{n}]$\\
 \hline
 Boltzmann weight &$\frac{q^{-\boldsymbol{I}_{[1,n]}}-s^2 q^{\boldsymbol{I}_{[\overline{n},\overline{1}]}}}{1-sx}$ & $\frac{-(1-q^{\boldsymbol{I}_{\ell}})q^{-\boldsymbol{I}_{\leq \ell}}}{1-s x}$ & $\frac{-s^2(1-q^{\boldsymbol{I}_{\ell}}) q^{\boldsymbol{I}_{>\ell}}}{1-sx}$ \\
 \hline
\end{tabular}
\caption{Boltzmann weights for a $\Gamma$ vertex with spectral parameter $x$}
\label{table2.1}
\end{table}

\begin{figure}[!ht]
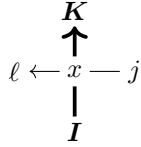

\centering
\deltaice{\bm{I}}{j}{\bm{K}}{\ell}
\caption{$\Delta$ vertex with spectral parameter $x$}
\label{figure2.2}
\end{figure}

\begin{table}[!ht]
\centering
\begin{tabular}{|c|c|c|c|} 
 \hline
 Condition & $j=\ell, \ell=0$ & $j=\ell,\ell\in [n]$ & $j=\ell,\ell\in [\overline{n}]$ \\
 \hline
 Boltzmann weight & $\frac{q^{-\boldsymbol{I}_{[\overline{n},\overline{1}]}}-sx q^{\boldsymbol{I}_{[1,n]}}}{1-sx}$ & $\frac{s(sq^{\boldsymbol{I}_{\ell}}-x)q^{\boldsymbol{I}_{< \ell}}}{1-sx}$ & $\frac{(1-s^{-1}xq^{-\boldsymbol{I}_{\ell}})q^{-\boldsymbol{I}_{>\ell}}}{1-sx}$ \\
 \hline
  Condition & $j<\ell, \ell=0$ & $j<\ell,\ell\in [n]$ & $j<\ell,\ell\in [\overline{n}]$\\
 \hline
 Boltzmann weight & $\frac{q^{-\boldsymbol{I}_{[\overline{n},\overline{1}]}}-s^2q^{\boldsymbol{I}_{[1,n]}}}{1-sx}$ & $\frac{-s^2(1-q^{\boldsymbol{I}_{\ell}})q^{\boldsymbol{I}_{<\ell}}}{1-sx}$ & $\frac{-(1-q^{\boldsymbol{I}_{\ell}})q^{-\boldsymbol{I}_{\geq\ell}}}{1-sx}$ \\
 \hline
 Condition & $j>\ell, \ell=0$ & $j>\ell,\ell\in [n]$ & $j>\ell,\ell\in [\overline{n}]$\\
 \hline
 Boltzmann weight & $\frac{x(s^{-1}q^{-\boldsymbol{I}_{[\overline{n},\overline{1}]}}-s q^{\boldsymbol{I}_{[1,n]}})}{1-sx}$ & $\frac{-sx(1-q^{\boldsymbol{I}_{\ell}})q^{\boldsymbol{I}_{<\ell}}}{1-s x}$ & $\frac{-s^{-1}x(1-q^{\boldsymbol{I}_{\ell}}) q^{-\boldsymbol{I}_{\geq \ell}}}{1-sx}$ \\
 \hline
\end{tabular}
\caption{Boltzmann weights for a $\Delta$ vertex with spectral parameter $x$}
\label{table2.2}
\end{table}

\begin{figure}[!ht]
\[
\begin{array}{|c|c|c|c|c|c|}
\hline
\text{Cap vertex} &\caps{+}{+} & \caps{i}{\overline{i}} & \caps{i}{i} & \caps{\overline{i}}{i} & \caps{\overline{i}}{\overline{i}} \\
\hline
\text{Boltzmann weight}  &  1 & t\phi(q^{-1\slash 2}x^{-1}) & 1-t\phi(q^{-1\slash 2}x^{-1}) & \phi(q^{-1\slash 2}x^{-1}) & 1-\phi(q^{-1\slash 2}x^{-1}) \\
\hline\end{array}\]
\caption{Boltzmann weights for a cap vertex with spectral parameter $x$, where $i \in[n]$}
\label{figure2.3}
\end{figure}

The model also depends on a signed permutation $\sigma\in B_n$ and a vector $\mu\in \{\pm 1,\cdots,\pm L\}^n$. The boundary conditions for the model are given as follows. On the left, for each $i\in [n]$, we assign color $0$ to the $(2i-1)$th row, and assign color $\sigma(i)$ to the $2i$th row. At the bottom, we assign the vector $\bm{e}_0$ to each vertical edge. At the top, for each $j\in \{1,\cdots,L\}$, we assign the vector $\sum_{i=1}^n (\mathbbm{1}_{\mu_i=j}\bm{e}_i+\mathbbm{1}_{\mu_i=-j}\bm{e}_{\overline{i}})$ to the vertical edge in the $j$th column. 

For example, when $n=2$, $L=4$, $\sigma=(1,2)$, $\mu=(4,-2)$, the model configuration is shown in Figure \ref{Configuration1}.

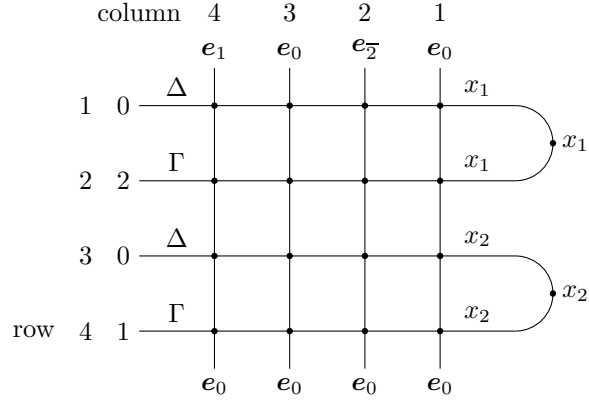
\begin{figure}[h]
\centering
\begin{tikzpicture}[scale=1]
\draw (0,1)--(5,1);
\draw (0,2)--(5,2);
\draw (0,3)--(5,3);
\draw (0,4)--(5,4);
\draw (1,0.5)--(1,4.5);
\draw (2,0.5)--(2,4.5);
\draw (3,0.5)--(3,4.5);
\draw (4,0.5)--(4,4.5);
\filldraw[black] (1,1) circle (1pt);
\filldraw[black] (2,1) circle (1pt);
\filldraw[black] (3,1) circle (1pt);
\filldraw[black] (4,1) circle (1pt);
\filldraw[black] (3,2) circle (1pt);
\filldraw[black] (2,2) circle (1pt);
\filldraw[black] (1,2) circle (1pt);
\filldraw[black] (4,2) circle (1pt);
\filldraw[black] (1,3) circle (1pt);
\filldraw[black] (2,3) circle (1pt);
\filldraw[black] (3,3) circle (1pt);
\filldraw[black] (4,3) circle (1pt);
\filldraw[black] (1,4) circle (1pt);
\filldraw[black] (2,4) circle (1pt);
\filldraw[black] (3,4) circle (1pt);
\filldraw[black] (4,4) circle (1pt);
\filldraw[black] (5.5,1.5) circle (1pt);
\filldraw[black] (5.5,3.5) circle (1pt);
\draw (5,1) arc(-90:90:0.5);
\draw (5,3) arc(-90:90:0.5);
\node at (0,1) [anchor=east] {$1$};
\node at (0,2) [anchor=east] {$0$};
\node at (0,3) [anchor=east] {$2$};
\node at (0,4) [anchor=east] {$0$};
\node at (-0.5,1) [anchor=east] {$4$};
\node at (-0.5,2) [anchor=east] {$3$};
\node at (-0.5,3) [anchor=east] {$2$};
\node at (-0.5,4) [anchor=east] {$1$};
\node at (-1,1) [anchor=east] {row};
\node at (0.5,1) [anchor=south] {$\Gamma$};
\node at (0.5,2) [anchor=south] {$\Delta$};
\node at (0.5,3) [anchor=south] {$\Gamma$};
\node at (0.5,4) [anchor=south] {$\Delta$};
\node at (4.5,1) [anchor=south] {$x_2$};
\node at (5.5,1.5) [anchor=west] {$x_2$};
\node at (4.5,2) [anchor=south] {$x_2$};
\node at (4.5,3) [anchor=south] {$x_1$};
\node at (4.5,4) [anchor=south] {$x_1$};
\node at (5.5,3.5) [anchor=west] {$x_1$};
\node at (1,4.5) [anchor=south] {$\bm{e}_1$};
\node at (2,4.5) [anchor=south] {$\bm{e}_0$};
\node at (3,4.5) [anchor=south] {$\bm{e}_{\overline{2}}$};
\node at (4,4.5) [anchor=south] {$\bm{e}_0$};
\node at (1,5) [anchor=south] {$4$};
\node at (2,5) [anchor=south] {$3$};
\node at (3,5) [anchor=south] {$2$};
\node at (4,5) [anchor=south] {$1$};
\node at (0,5) [anchor=south] {column};
\node at (1,0.5) [anchor=north] {$\bm{e}_0$};
\node at (2,0.5) [anchor=north] {$\bm{e}_0$};
\node at (3,0.5) [anchor=north] {$\bm{e}_0$};
\node at (4,0.5) [anchor=north] {$\bm{e}_0$};
\end{tikzpicture}
\caption{Model configuration when $n=2$, $L=4$, $\sigma=(1,2)$, $\mu=(4,-2)$}
\label{Configuration1}
\end{figure}

A state means an assignment of colors\slash vectors to all the edges in the rectangular lattice that is compatible with the boundary conditions as stated in the preceding. An admissible state means a state where the assignment of colors\slash vectors to the edges adjacent to each vertex in the lattice is one of the allowed assignments for that vertex (as listed in Table \ref{table2.1}, Table \ref{table2.2}, or Figure \ref{figure2.3} for $\Gamma$ vertex, $\Delta$ vertex, or cap vertex, respectively). The Boltzmann weight of an admissible state is defined as the product of the Boltzmann weights of all the vertices in the lattice corresponding to the state. The partition function of the model, denoted by $f_{\mu}^{\sigma}(\bm{x})$, is the sum of the Boltzmann weights for all admissible states.

For a $\Gamma$ vertex, we view the left and bottom edges adjacent to this vertex as input and the other two adjacent edges as output. For a $\Delta$ vertex, we view the right and bottom edges adjacent to this vertex as input and the other two adjacent edges as output. For a cap vertex, we view the bottom edge as input and the top edge as output. It can be checked that the Boltzmann weights for these vertices are stochastic, namely, the sum of the Boltzmann weights for a vertex with given input is equal to $1$.

\subsection{The $R$ vertices}\label{Sect.2.2}

In this subsection, we introduce three types of $R$ vertices called $\Gamma-\Gamma$ vertex, $\Delta-\Gamma$ vertex, and $\Delta-\Delta$ vertex. These $R$ vertices are rotated vertices, and depend on two spectral parameters. The Boltzmann weights for the three types of $R$ vertices with spectral parameters $x,y$ are given in Figures \ref{figure2.4}-\ref{figure2.6}.

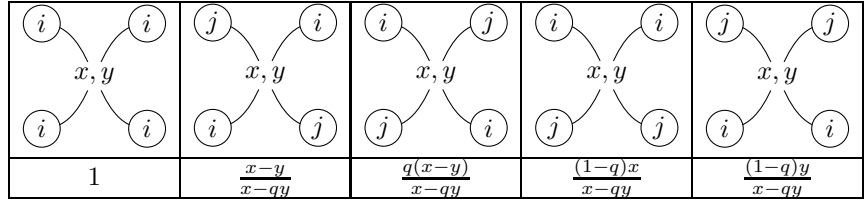
\begin{figure}[!ht]
\[\begin{array}{|c|c|c|c|c|}
\hline
\begin{tikzpicture}[scale=0.7]
\draw (0,0) to [out = 0, in = 180] (2,2);
\draw (0,2) to [out = 0, in = 180] (2,0);
\draw[fill=white] (0,0) circle (.35);
\draw[fill=white] (0,2) circle (.35);
\draw[fill=white] (2,0) circle (.35);
\draw[fill=white] (2,2) circle (.35);
\node at (0,0) {$i$};
\node at (0,2) {$i$};
\node at (2,2) {$i$};
\node at (2,0) {$i$};
\path[fill=white] (1,1) circle (.3);
\node at (1,1) {$x,y$};
\end{tikzpicture}&
\begin{tikzpicture}[scale=0.7]
\draw (0,0) to [out = 0, in = 180] (2,2);
\draw (0,2) to [out = 0, in = 180] (2,0);
\draw[fill=white] (0,0) circle (.35);
\draw[fill=white] (0,2) circle (.35);
\draw[fill=white] (2,0) circle (.35);
\draw[fill=white] (2,2) circle (.35);
\node at (0,0) {$i$};
\node at (0,2) {$j$};
\node at (2,2) {$i$};
\node at (2,0) {$j$};
\path[fill=white] (1,1) circle (.3);
\node at (1,1) {$x,y$};
\end{tikzpicture}&
\begin{tikzpicture}[scale=0.7]
\draw (0,0) to [out = 0, in = 180] (2,2);
\draw (0,2) to [out = 0, in = 180] (2,0);
\draw[fill=white] (0,0) circle (.35);
\draw[fill=white] (0,2) circle (.35);
\draw[fill=white] (2,0) circle (.35);
\draw[fill=white] (2,2) circle (.35);
\node at (0,0) {$j$};
\node at (0,2) {$i$};
\node at (2,2) {$j$};
\node at (2,0) {$i$};
\path[fill=white] (1,1) circle (.3);
\node at (1,1) {$x,y$};
\end{tikzpicture}&
\begin{tikzpicture}[scale=0.7]
\draw (0,0) to [out = 0, in = 180] (2,2);
\draw (0,2) to [out = 0, in = 180] (2,0);
\draw[fill=white] (0,0) circle (.35);
\draw[fill=white] (0,2) circle (.35);
\draw[fill=white] (2,0) circle (.35);
\draw[fill=white] (2,2) circle (.35);
\node at (0,0) {$j$};
\node at (0,2) {$i$};
\node at (2,2) {$i$};
\node at (2,0) {$j$};
\path[fill=white] (1,1) circle (.3);
\node at (1,1) {$x,y$};
\end{tikzpicture}&
\begin{tikzpicture}[scale=0.7]
\draw (0,0) to [out = 0, in = 180] (2,2);
\draw (0,2) to [out = 0, in = 180] (2,0);
\draw[fill=white] (0,0) circle (.35);
\draw[fill=white] (0,2) circle (.35);
\draw[fill=white] (2,0) circle (.35);
\draw[fill=white] (2,2) circle (.35);
\node at (0,0) {$i$};
\node at (0,2) {$j$};
\node at (2,2) {$j$};
\node at (2,0) {$i$};
\path[fill=white] (1,1) circle (.3);
\node at (1,1) {$x,y$};
\end{tikzpicture}\\
\hline
1
&\frac{x-y}{x-qy}
&\frac{q(x-y)}{x-qy}
&\frac{(1-q)x}{x-qy}
&\frac{(1-q)y}{x-qy}\\
\hline
\end{array}\]
\caption{Boltzmann weights for a $\Gamma-\Gamma$ vertex with spectral parameters $x,y$, where $i,j\in \{0\}\cup [\pm n]$ and $i<j$}
\label{figure2.4}
\end{figure}

\begin{figure}[!ht]
\[\begin{array}{|c|c|c|c|c|c|}
\hline
\begin{tikzpicture}[scale=0.7]
\draw (0,0) to [out = 0, in = 180] (2,2);
\draw (0,2) to [out = 0, in = 180] (2,0);
\draw[fill=white] (0,0) circle (.35);
\draw[fill=white] (0,2) circle (.35);
\draw[fill=white] (2,0) circle (.35);
\draw[fill=white] (2,2) circle (.35);
\node at (0,0) {$i$};
\node at (0,2) {$i$};
\node at (2,2) {$i$};
\node at (2,0) {$i$};
\path[fill=white] (1,1) circle (.3);
\node at (1,1) {$x,y$};
\end{tikzpicture}&
\begin{tikzpicture}[scale=0.7]
\draw (0,0) to [out = 0, in = 180] (2,2);
\draw (0,2) to [out = 0, in = 180] (2,0);
\draw[fill=white] (0,0) circle (.35);
\draw[fill=white] (0,2) circle (.35);
\draw[fill=white] (2,0) circle (.35);
\draw[fill=white] (2,2) circle (.35);
\node at (0,0) {$i$};
\node at (0,2) {$j$};
\node at (2,2) {$i$};
\node at (2,0) {$j$};
\path[fill=white] (1,1) circle (.3);
\node at (1,1) {$x,y$};
\end{tikzpicture}&
\begin{tikzpicture}[scale=0.7]
\draw (0,0) to [out = 0, in = 180] (2,2);
\draw (0,2) to [out = 0, in = 180] (2,0);
\draw[fill=white] (0,0) circle (.35);
\draw[fill=white] (0,2) circle (.35);
\draw[fill=white] (2,0) circle (.35);
\draw[fill=white] (2,2) circle (.35);
\node at (0,0) {$j$};
\node at (0,2) {$i$};
\node at (2,2) {$j$};
\node at (2,0) {$i$};
\path[fill=white] (1,1) circle (.3);
\node at (1,1) {$x,y$};
\end{tikzpicture}&
\begin{tikzpicture}[scale=0.7]
\draw (0,0) to [out = 0, in = 180] (2,2);
\draw (0,2) to [out = 0, in = 180] (2,0);
\draw[fill=white] (0,0) circle (.35);
\draw[fill=white] (0,2) circle (.35);
\draw[fill=white] (2,0) circle (.35);
\draw[fill=white] (2,2) circle (.35);
\node at (0,0) {$j$};
\node at (0,2) {$j$};
\node at (2,2) {$i$};
\node at (2,0) {$i$};
\path[fill=white] (1,1) circle (.3);
\node at (1,1) {$x,y$};
\end{tikzpicture}&
\begin{tikzpicture}[scale=0.7]
\draw (0,0) to [out = 0, in = 180] (2,2);
\draw (0,2) to [out = 0, in = 180] (2,0);
\draw[fill=white] (0,0) circle (.35);
\draw[fill=white] (0,2) circle (.35);
\draw[fill=white] (2,0) circle (.35);
\draw[fill=white] (2,2) circle (.35);
\node at (0,0) {$i$};
\node at (0,2) {$i$};
\node at (2,2) {$j$};
\node at (2,0) {$j$};
\path[fill=white] (1,1) circle (.3);
\node at (1,1) {$x,y$};
\end{tikzpicture}\\
\hline
1
&\frac{qxy-1}{xy-1}
&\frac{qxy-1}{q(xy-1)}
&\frac{1-q}{q(xy-1)}
&\frac{(1-q)xy}{xy-1}\\
\hline
\end{array}\]
\caption{Boltzmann weights for a $\Delta-\Gamma$ vertex with spectral parameters $x,y$: where $i,j\in \{0\}\cup [\pm n]$ and $i<j$}
\label{figure2.5}
\end{figure}

\begin{figure}[!ht]
\[\begin{array}{|c|c|c|c|c|c|}
\hline
\begin{tikzpicture}[scale=0.7]
\draw (0,0) to [out = 0, in = 180] (2,2);
\draw (0,2) to [out = 0, in = 180] (2,0);
\draw[fill=white] (0,0) circle (.35);
\draw[fill=white] (0,2) circle (.35);
\draw[fill=white] (2,0) circle (.35);
\draw[fill=white] (2,2) circle (.35);
\node at (0,0) {$i$};
\node at (0,2) {$i$};
\node at (2,2) {$i$};
\node at (2,0) {$i$};
\path[fill=white] (1,1) circle (.3);
\node at (1,1) {$x,y$};
\end{tikzpicture}&
\begin{tikzpicture}[scale=0.7]
\draw (0,0) to [out = 0, in = 180] (2,2);
\draw (0,2) to [out = 0, in = 180] (2,0);
\draw[fill=white] (0,0) circle (.35);
\draw[fill=white] (0,2) circle (.35);
\draw[fill=white] (2,0) circle (.35);
\draw[fill=white] (2,2) circle (.35);
\node at (0,0) {$i$};
\node at (0,2) {$j$};
\node at (2,2) {$i$};
\node at (2,0) {$j$};
\path[fill=white] (1,1) circle (.3);
\node at (1,1) {$x,y$};
\end{tikzpicture}&
\begin{tikzpicture}[scale=0.7]
\draw (0,0) to [out = 0, in = 180] (2,2);
\draw (0,2) to [out = 0, in = 180] (2,0);
\draw[fill=white] (0,0) circle (.35);
\draw[fill=white] (0,2) circle (.35);
\draw[fill=white] (2,0) circle (.35);
\draw[fill=white] (2,2) circle (.35);
\node at (0,0) {$j$};
\node at (0,2) {$i$};
\node at (2,2) {$j$};
\node at (2,0) {$i$};
\path[fill=white] (1,1) circle (.3);
\node at (1,1) {$x,y$};
\end{tikzpicture}&
\begin{tikzpicture}[scale=0.7]
\draw (0,0) to [out = 0, in = 180] (2,2);
\draw (0,2) to [out = 0, in = 180] (2,0);
\draw[fill=white] (0,0) circle (.35);
\draw[fill=white] (0,2) circle (.35);
\draw[fill=white] (2,0) circle (.35);
\draw[fill=white] (2,2) circle (.35);
\node at (0,0) {$j$};
\node at (0,2) {$i$};
\node at (2,2) {$i$};
\node at (2,0) {$j$};
\path[fill=white] (1,1) circle (.3);
\node at (1,1) {$x,y$};
\end{tikzpicture}&
\begin{tikzpicture}[scale=0.7]
\draw (0,0) to [out = 0, in = 180] (2,2);
\draw (0,2) to [out = 0, in = 180] (2,0);
\draw[fill=white] (0,0) circle (.35);
\draw[fill=white] (0,2) circle (.35);
\draw[fill=white] (2,0) circle (.35);
\draw[fill=white] (2,2) circle (.35);
\node at (0,0) {$i$};
\node at (0,2) {$j$};
\node at (2,2) {$j$};
\node at (2,0) {$i$};
\path[fill=white] (1,1) circle (.3);
\node at (1,1) {$x,y$};
\end{tikzpicture}\\
\hline
1
&\frac{y-x}{y-qx}
&\frac{q(y-x)}{y-qx}
&\frac{(1-q)x}{y-qx}
&\frac{(1-q)y}{y-qx}\\
\hline
\end{array}\]
\caption{Boltzmann weights for a $\Delta-\Delta$ vertex with spectral parameters $x,y$: where $i,j\in \{0\}\cup [\pm n]$ and $i<j$}
\label{figure2.6}
\end{figure}

For $(X,Y)\in\{(\Gamma,\Gamma),(\Delta,\Gamma),(\Delta,\Delta)\}$, we denote by $R_{XY}(\alpha,\beta,\gamma,\delta;x,y)$ the Boltzmann weight of an $X-Y$ vertex with spectral parameters $x,y$ and adjacent edges given in Figure \ref{RmatrixC}.
\begin{figure}[!h]
\centering
 \begin{tikzpicture}[scale=0.7]
\draw (0,0) to [out = 0, in = 180] (2,2);
\draw (0,2) to [out = 0, in = 180] (2,0);
\draw[fill=white] (0,0) circle (.35);
\draw[fill=white] (0,2) circle (.35);
\draw[fill=white] (2,0) circle (.35);
\draw[fill=white] (2,2) circle (.35);
\node at (0,0) {$\alpha$};
\node at (0,2) {$\beta$};
\node at (2,2) {$\gamma$};
\node at (2,0) {$\delta$};
\path[fill=white] (1,1) circle (.3);
\node at (1,1) {$x,y$};
\end{tikzpicture}
\caption{$R$ vertex}
\label{RmatrixC}
\end{figure}
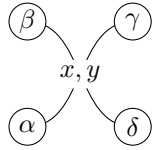

\subsection{The Yang-Baxter equations and the reflection equation}\label{Sect.2.3}

The following two propositions give three sets of Yang-Baxter equations and the reflection equation. These propositions are checked using a SAGE program and the fusion procedure as outlined in \cite[Appendix B]{BW}.

\begin{proposition}\label{YBE}
For any $(X,Y)\in\{(\Gamma,\Gamma),(\Delta,\Gamma),(\Delta,\Delta)\}$, the following holds. Assume that $S$ is $X$ vertex with spectral parameter $x$, $T$ is $Y$ vertex with spectral parameter $y$, and $R$ is $X-Y$ vertex with spectral parameters $x,y$. Then the partition functions of the following two configurations are equal for any fixed combination of colors $a,b,d,e\in \{0\}\cup [\pm n]$ and vectors $c,f\in \mathbb{N}^{2n}$.
\begin{equation*}
\hfill
\begin{tikzpicture}[baseline=(current bounding box.center)]
  \draw (0,1) to [out = 0, in = 180] (2,3) to (4,3);
  \draw (0,3) to [out = 0, in = 180] (2,1) to (4,1);
  \draw (3,0) to (3,4);
  \draw[fill=white] (0,1) circle (.3);
  \draw[fill=white] (0,3) circle (.3);
  \draw[fill=white] (3,4) circle (.3);
  \draw[fill=white] (4,3) circle (.3);
  \draw[fill=white] (4,1) circle (.3);
  \draw[fill=white] (3,0) circle (.3);
  \draw[fill=white] (2,3) circle (.3);
  \draw[fill=white] (2,1) circle (.3);
  \draw[fill=white] (3,2) circle (.3);
  \node at (0,1) {$a$};
  \node at (0,3) {$b$};
  \node at (3,4) {$c$};
  \node at (4,3) {$d$};
  \node at (4,1) {$e$};
  \node at (3,0) {$f$};
  \node at (2,3) {$g$};
  \node at (3,2) {$h$};
  \node at (2,1) {$i$};
\filldraw[black] (3,3) circle (2pt);
\node at (3,3) [anchor=south west] {$T$};
\filldraw[black] (3,1) circle (2pt);
\node at (3,1) [anchor=north west] {$S$};
\filldraw[black] (1,2) circle (2pt);
\node at (1,2) [anchor=west] {$R$};
\end{tikzpicture}\qquad\qquad
\begin{tikzpicture}[baseline=(current bounding box.center)]
  \draw (0,1) to (2,1) to [out = 0, in = 180] (4,3);
  \draw (0,3) to (2,3) to [out = 0, in = 180] (4,1);
  \draw (1,0) to (1,4);
  \draw[fill=white] (0,1) circle (.3);
  \draw[fill=white] (0,3) circle (.3);
  \draw[fill=white] (1,4) circle (.3);
  \draw[fill=white] (4,3) circle (.3);
  \draw[fill=white] (4,1) circle (.3);
  \draw[fill=white] (1,0) circle (.3);
  \draw[fill=white] (2,3) circle (.3);
  \draw[fill=white] (1,2) circle (.3);
  \draw[fill=white] (2,1) circle (.3);
  \node at (0,1) {$a$};
  \node at (0,3) {$b$};
  \node at (1,4) {$c$};
  \node at (4,3) {$d$};
  \node at (4,1) {$e$};
  \node at (1,0) {$f$};
  \node at (2,3) {$j$};
  \node at (1,2) {$k$};
  \node at (2,1) {$l$};
\filldraw[black] (1,3) circle (2pt);
\node at (1,3) [anchor=south west] {$S$};
\filldraw[black] (1,1) circle (2pt);
\node at (1,1) [anchor=north west]{$T$};
\filldraw[black] (3,2) circle (2pt);
\node at (3,2) [anchor=west] {$R$};
\end{tikzpicture}
\end{equation*}
\end{proposition}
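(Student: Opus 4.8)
My plan is to treat this as a standard ``$RTT=TTR$'' Yang--Baxter relation in which the two bent strands carry single colors in $\{0\}\cup[\pm n]$ (the auxiliary spaces) while the straight vertical strand carries a vector in $\mathbb{N}^{2n}$ (the quantum space). Fixing the boundary data $a,b,d,e$ and $c,f$, the left partition function is the sum over the internal labels $g,h,i$ of the product of the three vertex weights, and the right partition function is the analogous sum over $j,k,l$; the goal is to show these two numbers agree for all boundary data and for each of the three pairs $(X,Y)$. Note that $g,i$ (resp.\ $j,l$) are colors on horizontal strands while $h$ (resp.\ $k$) is a vector on the vertical strand.

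First I would record the conservation laws that make both sides honest finite sums. Each $\Gamma$ or $\Delta$ vertex is nonzero only when $\bm{I}+\bm{e}_j=\bm{K}+\bm{e}_\ell$, and each of the three $R$-vertices preserves the multiset of the two colors carried by its strands (visible in Figures \ref{figure2.4}--\ref{figure2.6}, where $\{\alpha,\beta\}=\{\gamma,\delta\}$ in every admissible configuration). Consequently, for fixed boundary data the internal vector $h$ on the left (resp.\ $k$ on the right) takes finitely many values, the colors $g,i$ (resp.\ $j,l$) range over a bounded set, and the bottom vector $f$ is forced to equal $c$ adjusted by the net color flux along the vertical strand. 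Thus each side is a finite sum, and---crucially---each vertex weight is a rational function of $x,y,s,q$ and of the monomials $q^{c_k}$, $k\in[\pm n]$ (entering through the partial sums $\bm{I}_{\le\ell}$, $\bm{I}_{[1,n]}$, etc.). The identity to be proved therefore reduces, for each combinatorial type of the tuple $(a,b,d,e)$ (i.e.\ for each pattern of order relations among these colors and the colors in the support of $c$), to a rational-function identity in finitely many indeterminates.

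The conceptual input that guarantees these identities, and that organizes the three cases, is the \emph{fusion procedure}. All of the weights in Tables \ref{table2.1}--\ref{table2.2} and Figures \ref{figure2.4}--\ref{figure2.6} arise as matrix coefficients of $R$-matrices built by fusion from a single fundamental vector $R$-matrix for the relevant quantum affine algebra, as in \cite[Appendix B]{BW}; the vector label records the symmetric (fused) tensor on the quantum strand, and the $\Gamma$ versus $\Delta$ distinction corresponds to reversing the orientation of a strand, relating the $\mathcal{L}$- and $\mathcal{M}$-weights by a crossing symmetry. This orientation picture is already visible in the $R$-weights themselves: the $(\Gamma,\Gamma)$ and $(\Delta,\Delta)$ weights depend only on the ratio $x/y$ (equal orientations), while the $(\Delta,\Gamma)$ weight depends on the product $xy$ (opposite orientations). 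Since the fundamental $R$-matrix satisfies the braided Yang--Baxter equation and fusion preserves it, the three stated equalities follow: $(\Gamma,\Gamma)$, $(\Delta,\Gamma)$, and $(\Delta,\Delta)$ are exactly the three relative orientations of the two auxiliary strands with respect to the quantum strand, producing the $R$-vertex of Figure \ref{figure2.4}, \ref{figure2.5}, or \ref{figure2.6} respectively.

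I expect the main obstacle to be the bookkeeping in the fusion identification rather than the Yang--Baxter mechanism itself: one must verify that the explicit normalizations and conventions in the tables (the spin parameter $s$, the placement of $x$ versus $y$, and the $x\mapsto x^{-1}$-type symmetry relating the $\mathcal{L}$- and $\mathcal{M}$-weights) match the fused weights on the nose, and that the three orientation cases map precisely onto Figures \ref{figure2.4}--\ref{figure2.6}. Once this matching is pinned down, no further computation is needed. For a self-contained argument one can instead bypass fusion and verify directly the finitely many rational-function identities isolated in the second paragraph; the only difficulty there is the proliferation of cases in $(a,b,d,e)$ and in the support of $c$, which is exactly what a symbolic computation (the SAGE check mentioned in the text) is well suited to dispatch.
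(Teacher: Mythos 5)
Your plan is, in substance, the paper's own: the paper offers no self-contained proof of Proposition \ref{YBE}, saying only that it is checked ``using a SAGE program and the fusion procedure as outlined in \cite[Appendix B]{BW}'' --- exactly the two routes (fusion identification, or direct symbolic verification of finitely many rational-function identities) that you lay out, so there is no divergence of method to compare.

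However, one of your structural claims is wrong, and it is precisely the kind of bookkeeping your ``direct verification'' route relies on. You assert that each of the three $R$-vertices preserves the multiset of colors, $\{\alpha,\beta\}=\{\gamma,\delta\}$, citing Figures \ref{figure2.4}--\ref{figure2.6}. This holds for the $\Gamma-\Gamma$ and $\Delta-\Delta$ vertices but fails for the $\Delta-\Gamma$ vertex: the fourth and fifth entries of Figure \ref{figure2.5} are the color-exchange configurations $(\alpha,\beta;\gamma,\delta)=(j,j;i,i)$ and $(i,i;j,j)$, with nonzero weights $\frac{1-q}{q(xy-1)}$ and $\frac{(1-q)xy}{xy-1}$. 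The correct statement is orientation-sensitive, as your own remark about $x/y$ versus $xy$ dependence already hints: in the $\Delta-\Gamma$ vertex the two strands are oppositely oriented, the inputs are $\alpha,\delta$ and the outputs are $\beta,\gamma$, and the conservation law is $\{\alpha,\delta\}=\{\beta,\gamma\}$ (which all five configurations of Figure \ref{figure2.5} do satisfy). Your finiteness conclusions survive, since colors range over the finite set $\{0\}\cup[\pm n]$ and the internal vectors $h,k$ are pinned down by the vertex-level conservation $\bm{I}+\bm{e}_j=\bm{K}+\bm{e}_\ell$; but an enumeration of the internal states $(g,h,i)$ and $(j,k,l)$ in the $(\Delta,\Gamma)$ case that prunes the sums using $\{\alpha,\beta\}=\{\gamma,\delta\}$ would silently drop the exchange configurations, and the two partition functions would then fail to match. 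With the conservation law corrected case by case, your reduction plus the SAGE/fusion check is exactly the paper's argument.
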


\begin{proposition}\label{Ref}
Assume that $S$ is $\Delta-\Delta$ vertex of spectral parameters $x,y$, $T$ is $\Delta-\Gamma$ vertex of spectral parameters $x,y$, $S'$ is $\Gamma-\Gamma$ vertex of spectral parameters $y,x$, $T'$ is $\Delta-\Gamma$ vertex of spectral parameters $y,x$, $C$ is cap vertex of spectral parameter $x$, and $C'$ is cap vertex of spectral parameter $y$. Then the partition functions of the following two configurations are equal for any fixed combination of colors $\epsilon_1,\epsilon_2,\epsilon_3,\epsilon_4\in \{0\}\cup[\pm n]$.
\begin{equation*}
\label{eqn:reflect1}
\hfill
\begin{tikzpicture}[baseline=(current bounding box.center)]
  \draw (0,0) to (4,0);
  \draw (0,1) to (1,1) to [out=0, in=180] (4,2);
  \draw (0,2) to (1,2) to [out=0, in=180] (4,3);
  \draw (0,3) to (1,3) to [out=0, in=180] (4,1);
  \node at (0,0) [anchor=east] {$\epsilon_4$};
  \node at (0,1) [anchor=east] {$\epsilon_3$};
  \node at (0,2) [anchor=east] {$\epsilon_2$};
  \node at (0,3) [anchor=east] {$\epsilon_1$};
  \draw (4,2) arc(-90:90:0.5);
  \draw (4,0) arc(-90:90:0.5);
  \filldraw[black] (4.5,0.5) circle (2pt);
  \filldraw[black] (4.5,2.5) circle (2pt);
  \filldraw[black] (2.2,2.35) circle (2pt);
  \filldraw[black] (2.75,1.65) circle (2pt);
  \node at (2.25,2.5) [anchor=south] {$S$};
  \node at (2.8, 1.7) [anchor=south] {$T$};
  \node at (4.5,0.5) [anchor=west] {$C$};
  \node at (4.5,2.5) [anchor=west] {$C'$};
\end{tikzpicture}
\quad\quad
\begin{tikzpicture}[baseline=(current bounding box.center)]
  \draw (0,3) to (4,3);
  \draw (0,1) to (1,1) to [out=0, in=180] (4,0);
  \draw (0,2) to (1,2) to [out=0, in=180] (4,1);
  \draw (0,0) to (1,0) to [out=0, in=180] (4,2);
  \node at (0,0) [anchor=east] {$\epsilon_4$};
  \node at (0,1) [anchor=east] {$\epsilon_3$};
  \node at (0,2) [anchor=east] {$\epsilon_2$};
  \node at (0,3) [anchor=east] {$\epsilon_1$};
  \draw (4,2) arc(-90:90:0.5);
  \draw (4,0) arc(-90:90:0.5);
  \filldraw[black] (4.5,0.5) circle (2pt);
  \filldraw[black] (4.5,2.5) circle (2pt);
  \filldraw[black] (2.2,0.67) circle (2pt);
  \filldraw[black] (2.8,1.35) circle (2pt);
  \node at (2.25,0.8) [anchor=south] {$S'$};
  \node at (2.8, 1.5) [anchor=south] {$T'$};
  \node at (4.5,0.5) [anchor=west] {$C'$};
  \node at (4.5,2.5) [anchor=west] {$C$};
\end{tikzpicture}
\end{equation*}
\end{proposition}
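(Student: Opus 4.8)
The plan is to prove the reflection equation by the standard route for solvable models with boundary: expand each of the two configurations into its partition function — a finite sum of products of $R$-vertex and cap weights over all admissible internal colorings — and check that the two sums agree as rational functions of $x$ and $y$ (with $q,\nu,t,s$ kept formal) for every choice of the boundary colors $\epsilon_1,\epsilon_2,\epsilon_3,\epsilon_4\in\{0\}\cup[\pm n]$. The first, and essential, reduction is to make this into a finite check, since a priori there are infinitely many color labels.

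I would begin by recording exactly how the weights see the colors. The $R$-vertex weights in Figures~\ref{figure2.4}--\ref{figure2.6} depend on $\alpha,\beta,\gamma,\delta$ only through which are equal and through the order relation $i<j$ in the chain $1<\cdots<n<0<\overline n<\cdots<\overline 1$; in particular a signed pair $i,\overline i$ is treated in the bulk like any pair $a<b$. The cap weights in Figure~\ref{figure2.3}, by contrast, are the only place where the sign structure enters: a cap is nonzero only when its two edges carry the same absolute value (both in some $\{i,\overline i\}$) or are both $0$, and it may flip a color's sign with weights $t\phi$, $1-t\phi$, $\phi$, $1-\phi$. Consequently both partition functions are invariant under any relabeling of $\{\epsilon_1,\dots,\epsilon_4\}$ that preserves order, equalities, and the sign-pairing pattern, so it suffices to verify the identity for a bounded list of representative color patterns (independent of $n,L$): all colors equal to $0$; one distinguished color; two unrelated colors $a<b$; a single signed pair $\{i,\overline i\}$; two signed pairs; and so on, in each case also fixing which lines must be cap-compatible.

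For each such pattern I would write both partition functions out explicitly, summing over the internal colors at the two crossings ($S,T$ on the left, $S',T'$ on the right) and the colors feeding the two caps. The admissibility conditions — the support of the tables together with the cap compatibility just described — collapse each sum to only a few nonzero terms, and the resulting equality becomes a rational-function identity in $x,y$. I would then clear the denominators $x-qy$, $xy-1$, $y-qx$ coming from the $R$-weights, together with the factors $(1-\nu t\,q^{-1/2}x^{-1})(1+\nu^{-1}q^{-1/2}x^{-1})$ (and their $y$-analogues) coming from $\phi$ in the cap weights, and verify the resulting polynomial identity.

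The main obstacle I anticipate is the family of cases involving one or two signed pairs $\{i,\overline i\}$: there all of the cap weights $t\phi$, $\phi$, $1-t\phi$, $1-\phi$ are active, a color may change sign on passing the U-turn, and the bookkeeping of admissible internal colorings together with the algebra of the $\phi$-factors is by far the heaviest. This is exactly where the fusion procedure of \cite[Appendix B]{BW} earns its place: the cap and $R$ weights are produced, via fusion, from fundamental weights for which the reflection equation is elementary or already known, so the signed-pair identities reduce to lower-rank checks rather than being confronted directly. I would therefore organize the hard cases around this fusion reduction — using the bulk Yang--Baxter equations of Proposition~\ref{YBE} to rearrange the interior crossings wherever that simplifies matters — and discharge whatever finite list of polynomial identities remains by a symbolic computation, as indicated in the statement.
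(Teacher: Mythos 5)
Your proposal takes essentially the same route as the paper: the paper disposes of Proposition \ref{Ref} (together with Proposition \ref{YBE}) in one line, by exactly the combination you describe --- a symbolic (SAGE) verification of the finite family of rational-function identities, together with the fusion procedure of \cite[Appendix B]{BW}. Your only addition is to spell out the reduction to finitely many color patterns (equalities, order relations, and sign-pairings among $\epsilon_1,\dots,\epsilon_4$), which the paper leaves implicit; this is a sound and in fact necessary observation, not a departure from the paper's argument.
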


\section{Recursive relations for partition functions}\label{Sect.3}

In this section, we establish recursive relations for partition functions of the model introduced in Section \ref{Sect.2}. 

\subsection{Evaluation of the partition function for special boundary conditions}\label{Sect.3.1}

When $\sigma$ and $\mu$ satisfy the following two conditions, there is a unique admissible state:
\begin{itemize}
    \item[(a)] $\sigma(i)$ and $\mu_{|\sigma(i)|}$ have opposite signs for each $i\in [n]$;
    \item[(b)] $|\mu_{|\sigma(1)|}|\geq |\mu_{|\sigma(2)|}|\geq\cdots\geq |\mu_{|\sigma(n)|}|$.
\end{itemize}
The following theorem evaluates the partition function $f_{\mu}^{\sigma}(\bm{x})$ when $\sigma=Id$ (the identity permutation) and the above two conditions hold. 

\begin{theorem}\label{Thm3.1}
Assume that $\sigma=Id$, $|\mu_1|\geq |\mu_2|\geq \cdots\geq |\mu_n|$, and $\mu_i$ is negative for every $i\in [n]$. For each $j\in [L]$, we let $m_j(\mu):=|\{i\in [n]:\mu_i=-j\}|$. Then
\begin{equation*}
    f_{\mu}^{\sigma} (\bm{x}) = t^n \prod_{i=1}^n \phi(q^{-1\slash 2}x_i^{-1})\prod_{i=1}^n\Big(\frac{s-x_i}{s(1-sx_i)}\Big)^{L-\mu_i-1}\prod_{i=1}^n\Big(\frac{-sx_i}{1-sx_i}\Big)\prod_{j=1}^L(s^{-2};q^{-1})_{m_j(\mu)},
\end{equation*}
where 
\begin{equation*}
    (\alpha;q^{-1})_m:=(1-\alpha)(1-q^{-1}\alpha)\cdots(1-q^{-(m-1)}\alpha), \text{ for any } m\geq 1; \quad (\alpha;q^{-1})_0:=1.
\end{equation*}
\end{theorem}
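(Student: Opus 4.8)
The plan is to exploit the fact, established just before the statement, that under conditions (a) and (b) there is a \emph{unique} admissible state; since the hypotheses here are precisely (a) and (b) specialized to $\sigma=\mathrm{Id}$, the partition function $f_\mu^\sigma(\bm{x})$ equals the Boltzmann weight of that single state, i.e.\ the product of the weights of all $\Gamma$, $\Delta$ and cap vertices. The whole proof therefore reduces to (i) writing down the unique state explicitly and (ii) evaluating and regrouping this product.

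First I would describe the trajectory of each colored path. Since the only sign-changing vertex is the cap while the $\Gamma$ and $\Delta$ vertices preserve colors, the color $i$ entering on the left of row $2i$ must reach the cap of parameter $x_i$ (it cannot turn upward as the positive color $i$, as that color would survive to the top, where only negative colors appear), flip to $\overline{i}$ there with weight $t\,\phi(q^{-1/2}x_i^{-1})$ (Figure \ref{figure2.3}), travel leftward along the $\Delta$ row $2i-1$, turn upward at column $|\mu_i|$, and then run straight up that column to the top. Because $|\mu_1|\ge\cdots\ge|\mu_n|$ these trajectories are nested, and paths sharing a value of $|\mu_i|$ share the vertical edges of that column. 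I would check that this configuration is admissible and matches all four boundaries, and invoke the established uniqueness to conclude it is the state.

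Next I would evaluate the product by sorting vertices into groups. The $n$ caps contribute $t^n\prod_i\phi(q^{-1/2}x_i^{-1})$. The horizontal-traversal vertices are those of row $2i$ (all $L$ columns) and of row $2i-1$ (the $|\mu_i|-1$ columns crossed before the turn at column $|\mu_i|$). The crucial observation is that every color moving \emph{vertically} is negative, so at a $\Gamma$ vertex carrying the positive color $i$ one has $\bm{I}_{\le i}=0$, and at a $\Delta$ vertex carrying $\overline{i}$ one has $\bm{I}_{>\overline{i}}=0$ (the vertical colors $\overline{i''}$ present there all have $i''>i$, hence $\overline{i''}<\overline{i}$); reading Tables \ref{table2.1} and \ref{table2.2} in the case $j=\ell$, each such vertex has weight $\tfrac{s-x_i}{s(1-sx_i)}$ irrespective of how many vertical paths cross it. This produces the exponent $L+|\mu_i|-1=L-\mu_i-1$.

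Finally I would treat the turn vertices. The turn of path $i$ is a $\Delta$ vertex at $(\text{row }2i-1,\ \text{column }|\mu_i|)$ with $j=\overline{i}$, $\ell=0$, so by Table \ref{table2.2} (case $j>\ell$, $\ell=0$) its weight is $\tfrac{x_i(s^{-1}q^{-a_i}-s)}{1-sx_i}$, where $a_i=\bm{I}_{[\overline{n},\overline{1}]}$ counts the paths $i''>i$ with $|\mu_{i''}|=|\mu_i|$ already occupying the vertical edge below. Grouping the indices by the common column $j=|\mu_i|$ so that $a_i$ runs over $0,1,\dots,m_j(\mu)-1$ within each block, the elementary identity $\tfrac{-sx}{1-sx}\,(1-q^{-a}s^{-2})=\tfrac{x(s^{-1}q^{-a}-s)}{1-sx}$ turns the product of turn weights into $\prod_i\tfrac{-sx_i}{1-sx_i}\cdot\prod_{j=1}^L(s^{-2};q^{-1})_{m_j(\mu)}$; all remaining vertices are empty (weight $1$), there being no purely vertical vertex since every vertical segment traverses rows that already carry a horizontal path. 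Multiplying the four contributions yields the claimed formula. The step I expect to be the main obstacle is exactly this vertex-by-vertex bookkeeping: making airtight that crossings by vertical negative-colored paths never alter a horizontal weight, and organizing the turns into column-blocks so the $q$-shifted arguments assemble into the symbols $(s^{-2};q^{-1})_{m_j(\mu)}$.
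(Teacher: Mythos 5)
Your proposal is correct and follows essentially the same route as the paper: identify the unique admissible state guaranteed by conditions (a) and (b), then evaluate its Boltzmann weight vertex by vertex. In fact your bookkeeping — the pass-through weights $\frac{s-x_i}{s(1-sx_i)}$ being unaffected by vertical crossings of negative colors, and the column-block grouping of turn weights via $\frac{-sx}{1-sx}(1-q^{-a}s^{-2})=\frac{x(s^{-1}q^{-a}-s)}{1-sx}$ into the symbols $(s^{-2};q^{-1})_{m_j(\mu)}$ — supplies exactly the computation that the paper's one-line ``evaluating the Boltzmann weight'' leaves implicit, and it checks out against Tables \ref{table2.1}, \ref{table2.2} and Figure \ref{figure2.3}.
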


\begin{proof}

Each admissible state can be interpreted as a collection of colored paths. Under the boundary conditions stated in the theorem, there is a unique admissible state: the path entering from the left of the second row must go through the $\Gamma$ vertices in the second row, take a U-turn at the cap vertex, move leftward in the first row until it reaches the $|\mu_{1}|$th column, and then move upward until reaching the top boundary; the path entering from the left of the fourth row must go through the $\Gamma$ vertices in the fourth row, take a U-turn at the cap vertex, move leftward in the third row until it reaches the $|\mu_{2}|$th column, and then move upward until reaching the top boundary; and so on. Evaluating the Boltzmann weight of this admissible state results in the conclusion of the theorem.

\end{proof}

\subsection{Recursive relations for partition functions}\label{Sect.3.2}

We have the following two recursive relations for partition functions.

\begin{theorem}\label{Thm3.2}
Assume that $1\leq i\leq n-1$ and $\sigma(i+1)>\sigma(i)$. Let $s_i\in B_n$ be the transposition $(i,i+1)$, and let $s_i\bm{x}=(x_1,\cdots,x_{i+1},x_i,\cdots,x_n)$. Then we have
\begin{equation*}
    q^{1+\mathbbm{1}_{\sigma(i+1)\in [n]}} f_{\mu}^{\sigma s_i}(\bm{x})=\frac{(q-1)x_{i+1}}{x_{i+1}-x_i}q^{\mathbbm{1}_{\sigma(i)\in [n]}} f_{\mu}^{\sigma}(\bm{x})+\frac{x_{i+1}-qx_i}{x_{i+1}-x_i} q^{\mathbbm{1}_{\sigma(i)\in [n]}} f_{\mu}^{\sigma}(s_i\bm{x}).
\end{equation*}
\end{theorem}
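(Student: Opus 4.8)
The plan is to run a ``train'' (railroad) argument that commutes the $i$th and $(i+1)$th double-row blocks of the lattice past each other, using the Yang--Baxter equations of Proposition \ref{YBE} in the bulk and the reflection equation of Proposition \ref{Ref} at the U-turn boundary. Recall that block $i$ consists of the $\Delta$ row $2i-1$ and the $\Gamma$ row $2i$, both with spectral parameter $x_i$ and joined by a cap vertex of parameter $x_i$, and that its left boundary colors are $0$ in row $2i-1$ and $\sigma(i)$ in row $2i$. Passing from $\sigma$ to $\sigma s_i$ interchanges exactly the colors $\sigma(i)\leftrightarrow\sigma(i+1)$ fed into rows $2i$ and $2i+2$, whereas passing from $\bm{x}$ to $s_i\bm{x}$ interchanges the spectral parameters $x_i\leftrightarrow x_{i+1}$ of these two blocks. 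Both operations are implemented by transporting $R$-vertices through the relevant rows, so the three partition functions in the statement should be tied together by a single $R$-matrix relation, with the hypothesis $\sigma(i)<\sigma(i+1)$ placing us in the $i<j$ regime of the weight tables in Figures \ref{figure2.4}--\ref{figure2.6}.

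First I would attach $R$-vertices of the three types ($\Gamma$-$\Gamma$, $\Delta$-$\Gamma$, $\Delta$-$\Delta$) with parameters $x_i,x_{i+1}$ at the far left of rows $2i-1,2i,2i+1,2i+2$, arranged so that the four crossings braid the two blocks. Applying Proposition \ref{YBE} once for each of the $L$ bulk columns and each relevant pair of rows, I would slide these $R$-vertices rightward across the full width of the two blocks without changing the partition function, until they sit just to the left of the caps of parameters $x_i$ and $x_{i+1}$. At this point the local configuration is precisely the left-hand side of the reflection equation, so applying Proposition \ref{Ref} moves the braiding past the caps and interchanges the parameters carried by the two blocks. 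Sliding the emerging $R$-vertices back out to the left by Proposition \ref{YBE} then produces the configuration computing $f_\mu^\sigma(s_i\bm{x})$, but with one residual $R$-vertex still attached at the left boundary acting on the incoming colors $0,\sigma(i),0,\sigma(i+1)$.

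The recursion is then extracted by expanding this residual boundary $R$-vertex over its admissible states. The two rows carrying color $0$ contribute only the diagonal weight $1$, so the residual vertex reduces to a single effective $R$-vertex acting on the pair $(\sigma(i),\sigma(i+1))$ with $\sigma(i)<\sigma(i+1)$. This vertex has exactly two admissible continuations whose weights are read off the $i<j$ entries of Figures \ref{figure2.4}--\ref{figure2.6}: the color-preserving one, which reconnects to the configuration for $f_\mu^\sigma(s_i\bm{x})$ and should produce the coefficient $\tfrac{x_{i+1}-qx_i}{x_{i+1}-x_i}$, and the color-swapping one, which reconnects to the configuration for $f_\mu^{\sigma s_i}(\bm{x})$ and should produce $\tfrac{(q-1)x_{i+1}}{x_{i+1}-x_i}$ (note that these two coefficients sum to $q$). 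Equating the two evaluations of the transported configuration then yields the stated identity up to the overall prefactors.

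The main obstacle I anticipate is the bookkeeping of the powers of $q$, namely the factors $q^{1+\mathbbm{1}_{\sigma(i+1)\in[n]}}$ on the left and $q^{\mathbbm{1}_{\sigma(i)\in[n]}}$ on the right, together with the renormalization of the $R$-weight denominators from $x_{i+1}-qx_i$ to $x_{i+1}-x_i$. These are invisible at the schematic level: they arise from the fine structure of the cap weights $\phi(q^{-1/2}x^{-1})$ and of the $R$-vertex weights when the transported color is positive (in $[n]$) rather than negative or $0$, since a positive color crossing the caps and the braiding picks up extra normalizing factors absent for the other colors. Pinning them down requires tracking the precise spectral-parameter roles ($x$ versus $y$, and which of $x_i,x_{i+1}$ plays each role after the reflection step) through every application of Propositions \ref{YBE} and \ref{Ref}, and verifying the indicator-dependent normalizations case by case on the signs of $\sigma(i)$ and $\sigma(i+1)$. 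I would check these normalizations on the smallest nontrivial cases (both colors positive, both negative, and mixed) to fix the global $q$-powers before asserting the identity in general.
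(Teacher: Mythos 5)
Your overall strategy --- attach $R$-vertices at the left, transport them through the lattice with Proposition \ref{YBE}, exchange the caps with Proposition \ref{Ref}, transport back, and expand the residual crossings --- is indeed the argument the paper runs, but the specific braid you choose breaks the proof at both ends. Proposition \ref{Ref} is stated for one particular \emph{two}-crossing configuration (a single line crossing two others, one $\Delta$-$\Delta$ and one $\Delta$-$\Gamma$ vertex, abutting the two caps); your four-crossing block-commutation braid is not of this form, so after the YBE slides the local picture is \emph{not} ``precisely the left-hand side of the reflection equation,'' and a single application of Proposition \ref{Ref} cannot carry a full commutation braid past the caps. Relatedly, attaching that braid to the left of the $f_\mu^{\sigma}(\bm{x})$ lattice does not simply rescale the partition function: color conservation forces the two $\Delta$-lines to carry $0$ and forces $\{\sigma(i),\sigma(i+1)\}$ into the two $\Gamma$-rows, but in \emph{either} order, so your starting configuration already equals a nontrivial combination $W_1 f_\mu^{\sigma}(\bm{x})+W_2 f_\mu^{\sigma s_i}(\bm{x})$ with both coefficients generically nonzero --- a step your proposal never confronts. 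The paper's proof is engineered around exactly these two points: it attaches only the two crossings appearing in Proposition \ref{Ref} (a $\Delta$-$\Delta$ and a $\Delta$-$\Gamma$ vertex, configuration (\ref{Eq3.1})), placed against the $s_i\bm{x}$ lattice, where conservation forces a \emph{unique} admissible state with weight $R_{\Delta\Delta}(+,+,+,+;x_i,x_{i+1})R_{\Delta\Gamma}(\sigma(i),+,\sigma(i),+;x_i,x_{i+1})$, giving the single term (\ref{Eq3.8}).

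The endgame is also misidentified, and not merely up to $q$-powers. A residual crossing sitting to the left of the $s_i\bm{x}$ lattice yields, in its color-swapping state, the configuration in which the $x_{i+1}$-block carries $\sigma(i+1)$ and the $x_i$-block carries $\sigma(i)$: that is $f_\mu^{\sigma s_i}(s_i\bm{x})$, not $f_\mu^{\sigma s_i}(\bm{x})$. Carried out literally, your scheme therefore produces one linear relation among \emph{four} partition functions, $f_\mu^{\sigma}(\bm{x})$, $f_\mu^{\sigma s_i}(\bm{x})$, $f_\mu^{\sigma}(s_i\bm{x})$, $f_\mu^{\sigma s_i}(s_i\bm{x})$, from which Theorem \ref{Thm3.2} does not follow; and no assignment of $q$-powers repairs your stated three-term identity, because in the theorem the function standing alone is $f_\mu^{\sigma s_i}(\bm{x})$ while the coefficient $(q-1)x_{i+1}/(x_{i+1}-x_i)$ multiplies $f_\mu^{\sigma}(\bm{x})$, whereas you put $f_\mu^{\sigma}(\bm{x})$ alone and attach that coefficient to $f_\mu^{\sigma s_i}$. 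In the paper the two-term expansion (\ref{Eq3.5}) lives on the $\bm{x}$ lattice --- the swapping state of the residual $\Gamma$-$\Gamma$ vertex gives $f_\mu^{\sigma}(\bm{x})$ and the preserving state gives $f_\mu^{\sigma s_i}(\bm{x})$ --- and the indicator factors $q^{\mathbbm{1}_{\sigma(i)\in[n]}}$, $q^{\mathbbm{1}_{\sigma(i+1)\in[n]}}$ come from the $\Delta$-$\Gamma$ weights, namely $\frac{qxy-1}{xy-1}$ for a color in $[n]$ versus $\frac{qxy-1}{q(xy-1)}$ for a color in $[\overline{n}]$, not from the cap weights $\phi$, which play no role in this proof at all.
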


\begin{theorem}\label{Thm3.3}
Assume that $\sigma(n)\in [n]$. Let $s_n\in B_n$ be such that $s_n(n)=\overline{n}$ and $s_n(j)=j$ for every $j\in \{1,\cdots,n-1\}$, and let $s_n \bm{x}=(x_1,\cdots,x_{n-1},x_n^{-1})$. Then we have
\begin{eqnarray*}
  &&  q s^{-2L}\Big(\frac{1-sx_n}{s-x_n}\Big)^L f_{\mu}^{\sigma s_n}(\bm{x})\nonumber\\
  &=& \frac{(\sqrt{q}-\nu t x_n)(\sqrt{q}+\nu^{-1}x_n)}{t(1-x_n^2)}\Big(\frac{1-qx_n^2}{x_n^2-q}\Big(\frac{s-x_n}{1-sx_n}\Big)^L f_{\mu}^{\sigma}(s_n\bm{x})-\Big(\frac{1-sx_n}{s-x_n}\Big)^Lf_{\mu}^{\sigma}(\bm{x})\Big)\nonumber\\
  &&+\Big(\frac{1-sx_n}{s-x_n}\Big)^L f_{\mu}^{\sigma}(\bm{x}).
\end{eqnarray*}
\end{theorem}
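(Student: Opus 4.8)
The plan is to localize the identity to the bottommost pair of rows and then realize the sign-flip $\sigma(n)\mapsto\overline{\sigma(n)}$ as a reflection off the U-turn boundary that inverts the spectral parameter $x_n$. First I would note that, since $(\sigma s_n)(i)=\sigma(i)$ for $i<n$ and $(\sigma s_n)(n)=\overline{\sigma(n)}$, the boundary data of $f_\mu^{\sigma s_n}$ and of $f_\mu^{\sigma}$ coincide everywhere except on the left edge of row $2n$, where the assigned color changes from $\sigma(n)$ (which is positive, by the hypothesis $\sigma(n)\in[n]$) to $\overline{\sigma(n)}\in[\overline n]$. Thus the two partition functions are built from identical configurations in rows $1,\dots,2n-2$, and differ only through the bottom double-row: rows $2n-1$ ($\Delta$) and $2n$ ($\Gamma$), both carrying spectral parameter $x_n$ and joined on the right by the cap vertex $C_n$. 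It therefore suffices to compare the weight contributed by this double-row when its incoming color is $\sigma(n)$ versus $\overline{\sigma(n)}$, summing freely over the horizontal state along the interface between rows $2n-2$ and $2n-1$.

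The structural input I would exploit is a sign-flip symmetry between the two weight tables: comparing Table \ref{table2.1} and Table \ref{table2.2} entry by entry shows that $\mathcal{M}_x(\bm I,j,\bm K,\ell)$ equals $\mathcal{L}_x$ evaluated after reversing the signs of all colors and correspondingly reindexing the occupation vectors (so that the roles of $[n]$ and $[\overline n]$, and of $\bm I_{[1,n]}$ and $\bm I_{[\overline n,\overline 1]}$, are interchanged). Hence the $\Delta$ row, which carries the leftward post-U-turn half of the bottom path, is the sign-reversed mirror of a $\Gamma$ row, and the cap vertex is exactly the $K$-matrix that decides, with weights $t\phi(q^{-1/2}x_n^{-1})$, $1-t\phi(q^{-1/2}x_n^{-1})$, $\phi(q^{-1/2}x_n^{-1})$, $1-\phi(q^{-1/2}x_n^{-1})$ from Figure \ref{figure2.3}, whether a color keeps or reverses its sign at the turn. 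Because these cap weights are functions of $x_n^{-1}$ rather than $x_n$, turning the bottom path around at the cap trades a $\Gamma$ row at parameter $x_n$ for one at parameter $x_n^{-1}$; this is the origin of the argument $s_n\bm x=(x_1,\dots,x_{n-1},x_n^{-1})$ appearing on the right-hand side.

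Concretely I would run a boundary railroad: insert a $\Delta$–$\Gamma$ $R$ vertex (Figure \ref{figure2.5}) at the extreme left of the bottom double-row, propagate it rightward through the $L$ columns using the $(\Delta,\Gamma)$ case of the Yang–Baxter equation in Proposition \ref{YBE}, and absorb it at $C_n$ using the reflection equation of Proposition \ref{Ref} (where the relevant boundary crossings are precisely the $\Delta$–$\Gamma$ vertices and the caps). Reading off the two ends of the railroad yields a linear relation among $f_\mu^{\sigma s_n}(\bm x)$, $f_\mu^{\sigma}(s_n\bm x)$, and $f_\mu^{\sigma}(\bm x)$ whose coefficients are built from the $\Delta$–$\Gamma$ weights and the cap weights. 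The powers $\big(\tfrac{1-sx_n}{s-x_n}\big)^{\pm L}$ and $s^{-2L}$ should emerge as products over the $L$ columns of the diagonal pass-through weights of the bottom double-row (the $j=\ell$ entries of Tables \ref{table2.1}–\ref{table2.2} at empty occupation), while the rational prefactor comes from the cap weights at $x_n$ and at $x_n^{-1}$: a direct computation gives $\phi(q^{-1/2}x_n^{-1})=\frac{qx_n^2-1}{(\sqrt q x_n-\nu t)(\sqrt q x_n+\nu^{-1})}$ and $\phi(q^{-1/2}x_n)=\frac{q-x_n^2}{(\sqrt q-\nu t x_n)(\sqrt q+\nu^{-1}x_n)}$, so that $\frac{(\sqrt q-\nu t x_n)(\sqrt q+\nu^{-1}x_n)}{t(1-x_n^2)}=\frac{q-x_n^2}{t(1-x_n^2)\,\phi(q^{-1/2}x_n)}$ and the ratio $\frac{1-qx_n^2}{x_n^2-q}$ are exactly the factors relating the two evaluations.

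The main obstacle I anticipate is the coefficient bookkeeping rather than the structural shape of the identity. That $f^{\sigma s_n}$ is a two-term combination $A\,f^{\sigma}(s_n\bm x)+B\,f^{\sigma}(\bm x)$ is essentially forced by integrability, but pinning down the exact rational functions $A,B$ — the precise interplay of $\phi(q^{-1/2}x_n)$ with $\phi(q^{-1/2}x_n^{-1})$, the factor $t$, and the correct column powers — requires carefully matching the reflection-equation and cap weights and applying the $\Gamma/\Delta$ sign-flip symmetry consistently at every column. A delicate point is that the natural insertion uses $\Delta$–$\Gamma$ weights at the degenerate product $x_n\cdot x_n^{-1}=1$, so the railroad must be arranged (or the relevant specialization taken as a limit) to avoid the spurious poles in Figure \ref{figure2.5}. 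I would finally verify the constants against the explicit base case of Theorem \ref{Thm3.1} (where $\sigma=\mathrm{Id}$ and every $\mu_i$ is negative), in which both sides are closed-form products, before asserting the general relation.
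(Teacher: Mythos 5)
Your overall shape --- localize to the bottom double-row, run an $R$-vertex railroad along it, and absorb the spectral-parameter inversion at the cap --- matches the paper's strategy, but two of your load-bearing steps are not the ones the paper uses, and as stated they fail. First, the reflection equation of Proposition \ref{Ref} cannot do the absorption you ask of it: it is a four-line identity relating \emph{two} double-rows carrying two distinct spectral parameters $x,y$ and their two caps, and in the paper it is used only in the proof of Theorem \ref{Thm3.2} (crossing rows $i$ and $i+1$). For Theorem \ref{Thm3.3} the paper instead verifies directly a different local identity, (\ref{Eq3.12}): the composite of a $\Delta$--$\Delta$ vertex with spectral parameters $x_n^{-1},x_n$ and the original cap equals an explicit linear combination of two \emph{auxiliary} caps $C_1$, $C_2$ (Figures \ref{FishC5}, \ref{FishC6}). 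These auxiliary caps, and the fact that the resulting partition functions $Z_2$, $Z_2'$ can be reinterpreted as $\big(\tfrac{s(s-x_n)}{1-sx_n}\big)^L f_\mu^{\sigma}(s_n\bm{x})$ and $\big(\tfrac{s-x_n}{s(1-sx_n)}\big)^L f_\mu^{\sigma s_n}(s_n\bm{x})$, are exactly what produces the two-term right-hand side; nothing in Proposition \ref{Ref} supplies this decomposition, so the step ``absorb it at $C_n$ using the reflection equation'' has no justification.

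Second, you correctly flag that your railroad would use a $\Delta$--$\Gamma$ vertex at the degenerate product $x_n\cdot x_n^{-1}=1$, but you leave the resolution as ``arrange the railroad or take a limit''; this degeneracy is precisely what the paper's proof is engineered to avoid, and the avoidance is not cosmetic. The paper's mechanism is a per-state color/weight swap available only in the bottom row (because all bottom edges carry $\bm{e}_0$, only the three vertex types of Figure \ref{FishC} occur there): swap the colors $+\leftrightarrow R$ along row $2n$ and reinterpret each vertex weight there as a $\Delta$ weight at spectral parameter $x_n^{-1}$, with the cap weights replaced as in Figure \ref{FishC4}, at the cost of the explicit factor $\big(\tfrac{s-x_n}{s(1-sx_n)}\big)^L$ in (\ref{Eq3.15}). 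After this swap both bottom rows are $\Delta$ rows, at $x_n$ and $x_n^{-1}$, so the intertwiner to be pushed through by Proposition \ref{YBE} is a $\Delta$--$\Delta$ vertex at $(x_n^{-1},x_n)$, whose weights are regular; no singular specialization occurs anywhere. Your global ``sign-flip symmetry'' between Tables \ref{table2.1} and \ref{table2.2} is a reasonable observation, but it is not the swap actually needed: the paper performs three separate two-color swaps ($+\leftrightarrow R$ with $\Delta$ at $x_n^{-1}$ for $Z_1$; $+\leftrightarrow R$ with $\Gamma$ at $x_n^{-1}$ for $Z_2$; $+\leftrightarrow\overline{R}$ with $\Gamma$ at $x_n^{-1}$ for $Z_2'$), each valid only because of the empty bottom boundary. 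Without this concrete mechanism and the identity (\ref{Eq3.12}), the coefficients $A,B$ you admit you cannot pin down are not recoverable --- checking against Theorem \ref{Thm3.1} can confirm them but not derive them.
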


Let $l(\cdot)$ be the length function on $B_n$. We define
\begin{equation*}
    F_{\mu}^{\sigma}(\bm{x}):=q^{l(\sigma)+\sum_{j=1}^n(n-j)\mathbbm{1}_{\sigma(j)\in [n]}} s^{-2L\sum_{j=1}^n\mathbbm{1}_{\sigma(j)\in [\overline{n}]}}\Big(\frac{t}{q}\Big)^{\sum_{j=1}^n \mathbbm{1}_{\sigma(j)\in [\overline{n}]}}\prod_{j=1}^n \Big(\frac{1-sx_j}{s-x_j}\Big)^L \prod_{j=1}^n\sqrt{\frac{x_j^2-q}{1-qx_j^2}} f_{\mu}^{\sigma}(\bm{x}).
\end{equation*}
From Theorems \ref{Thm3.2} and \ref{Thm3.3}, we can deduce that if $1\leq i\leq n-1$ and $\sigma(i+1)>\sigma(i)$,
\begin{equation*}
    F_{\mu}^{\sigma s_i}(\bm{x})=\frac{(q-1)x_{i+1}}{x_{i+1}-x_i} F_{\mu}^{\sigma}(\bm{x})+\frac{x_{i+1}-qx_i}{x_{i+1}-x_i}F_{\mu}^{\sigma}(s_i \bm{x});
\end{equation*}
if $\sigma(n)\in [n]$, 
\begin{equation*}
    F_{\mu}^{\sigma s_n}(\bm{x})=\frac{(1-\nu t q^{-1\slash 2}x_n)(1+\nu^{-1}q^{-1\slash 2} x_n)}{1-x_n^2}(F_{\mu}^{\sigma}(s_n\bm{x})-F_{\mu}^{\sigma}(\bm{x}))+\frac{t}{q}F_{\mu}^{\sigma}(\bm{x}).
\end{equation*}
For any rational function $g(\bm{x})$ and any $i\in [n]$, we define $s_i g(\bm{x}):=g(s_i\bm{x})$. Following the Noumi representation for affine Hecke algebra of type $\tilde{C}_n$ (see e.g. \cite{MR1715325,MR3369560}), for any rational function $g(\bm{x})$, we define
\begin{equation*}
    T_i g:=q g+\frac{x_{i+1}-qx_i}{x_{i+1}-x_i}(s_ig-g), \text{ for any } 1\leq i\leq n-1,
\end{equation*}
\begin{equation*}
    T_n g:=-ab g+\frac{(1-ax_n)(1-bx_n)}{1-x_n^2}(s_ng-g),
\end{equation*}
where $a,b\in\mathbb{C}$ are parameters. Thus taking $a=\nu t q^{-1\slash 2}$ and $b=-\nu^{-1}q^{-1\slash 2}$, we obtain that for any $i\in [n]$, $F_{\mu}^{\sigma s_i}=T_i F_{\mu}^{\sigma}$ as rational functions of $\bm{x}$. 

The rest of this subsection is devoted to the proofs of Theorems \ref{Thm3.2} and \ref{Thm3.3}.

\begin{proof}[Proof of Theorem \ref{Thm3.2}]

As shown in the following figure, we attach two $R$ vertices (a $\Delta-\Delta$ vertex $S$ with spectral parameters $x_i,x_{i+1}$ and a $\Delta-\Gamma$ vertex $T$ with spectral parameters $x_i,x_{i+1}$) to the left of the lattice configuration with spectral parameters $s_i\bm{x}$. Here we only show the $i$th and $(i+1)$th rows in the figure.
\begin{equation}\label{Eq3.1}
    \begin{tikzpicture}[baseline=(current bounding box.center)]
  \draw (0,0) to (0.6,0) to [out=0, in=180] (3,2) to (4,2);
  \draw (0,3) to (0.6,3) to [out=0, in=180] (3,1) to (4,1);
  \draw (0,1) to [out=0, in=180] (2.4,3) to (3,3) to (4,3);
  \draw (2.5,0) to (4,0);
  \draw [dashed] (4,2) to (5,2);
  \draw [dashed] (4,1) to (5,1);
  \draw [dashed] (4,3) to (5,3);
  \draw [dashed] (4,0) to (5,0);
  \draw (5,2) to (5.5,2);
  \draw (5,1) to (5.5,1);
  \draw (5,3) to (5.5,3);
  \draw (5,0) to (5.5,0);
  \draw (5.5,2) arc(-90:90:0.5);
  \draw (5.5,0) arc(-90:90:0.5);
  \draw (3,-0.5) to (3,3.5);
  \draw (4,-0.5) to (4,3.5);
  \draw (5,-0.5) to (5,3.5);
  \filldraw[black] (6,0.5) circle (2pt);
  \filldraw[black] (6,2.5) circle (2pt);
  \filldraw[black] (2.1,1.5) circle (2pt);
  \filldraw[black] (1.5,2.5) circle (2pt);
  \filldraw[black] (3,0) circle (2pt);
  \filldraw[black] (4,0) circle (2pt);
  \filldraw[black] (5,0) circle (2pt);
  \filldraw[black] (3,1) circle (2pt);
  \filldraw[black] (4,1) circle (2pt);
  \filldraw[black] (5,1) circle (2pt);
  \filldraw[black] (3,2) circle (2pt);
  \filldraw[black] (4,2) circle (2pt);
  \filldraw[black] (5,2) circle (2pt);
  \filldraw[black] (3,3) circle (2pt);
  \filldraw[black] (4,3) circle (2pt);
  \filldraw[black] (5,3) circle (2pt);
  \node at (2.1,1.5) [anchor=south] {$T$};
  \node at (1.5,2.5) [anchor=south] {$S$};
  \node at (0,0) [anchor=east] {$\sigma(i)$};
  \node at (0,1) [anchor=east] {$+$};
  \node at (2.5,0) [anchor=east] {$\sigma(i+1)$};
  \node at (0,3) [anchor=east] {$+$};
  \node at (5.5,0) [anchor=south] {$x_{i}$};
  \node at (5.5,1) [anchor=south] {$x_{i}$};
  \node at (5.5,2) [anchor=south] {$x_{i+1}$};
  \node at (5.5,3) [anchor=south] {$x_{i+1}$};
  \node at (2.7,0) [anchor=south] {$\Gamma$};
  \node at (2.7,1) [anchor=south] {$\Delta$};
  \node at (2.7,2) [anchor=south] {$\Gamma$};
  \node at (2.7,3) [anchor=south] {$\Delta$};
  \node at (6.1,0.5) [anchor=west] {$x_i$};
  \node at (6.1,2.5) [anchor=west] {$x_{i+1}$};
\end{tikzpicture}
\end{equation} 
Note that the only admissible configuration of the two $R$ vertices in (\ref{Eq3.1}) is given as follows:
\begin{equation*}
    \begin{tikzpicture}[baseline=(current bounding box.center)]
  \draw (0,0) to (0.6,0) to [out=0, in=180] (3,2) to (4,2);
  \draw (0,3) to (0.6,3) to [out=0, in=180] (3,1) to (4,1);
  \draw (0,1) to [out=0, in=180] (2.4,3) to (3,3) to (4,3);
  \draw (2.5,0) to (4,0);
  \draw [dashed] (4,2) to (5,2);
  \draw [dashed] (4,1) to (5,1);
  \draw [dashed] (4,3) to (5,3);
  \draw [dashed] (4,0) to (5,0);
  \draw (5,2) to (5.5,2);
  \draw (5,1) to (5.5,1);
  \draw (5,3) to (5.5,3);
  \draw (5,0) to (5.5,0);
  \draw (5.5,2) arc(-90:90:0.5);
  \draw (5.5,0) arc(-90:90:0.5);
  \draw (3,-0.5) to (3,3.5);
  \draw (4,-0.5) to (4,3.5);
  \draw (5,-0.5) to (5,3.5);
  \filldraw[black] (6,0.5) circle (2pt);
  \filldraw[black] (6,2.5) circle (2pt);
  \filldraw[black] (2.1,1.5) circle (2pt);
  \filldraw[black] (1.5,2.5) circle (2pt);
  \filldraw[black] (3,0) circle (2pt);
  \filldraw[black] (4,0) circle (2pt);
  \filldraw[black] (5,0) circle (2pt);
  \filldraw[black] (3,1) circle (2pt);
  \filldraw[black] (4,1) circle (2pt);
  \filldraw[black] (5,1) circle (2pt);
  \filldraw[black] (3,2) circle (2pt);
  \filldraw[black] (4,2) circle (2pt);
  \filldraw[black] (5,2) circle (2pt);
  \filldraw[black] (3,3) circle (2pt);
  \filldraw[black] (4,3) circle (2pt);
  \filldraw[black] (5,3) circle (2pt);
  \node at (2.1,1.5) [anchor=south] {$T$};
  \node at (1.5,2.5) [anchor=south] {$S$};
  \node at (0,0) [anchor=east] {$\sigma(i)$};
  \node at (0,1) [anchor=east] {$+$};
  \node at (2.5,0) [anchor=east] {$\sigma(i+1)$};
  \node at (0,3) [anchor=east] {$+$};
  \node at (1.6,2.2) [anchor=west] {$+$};
  \node at (2.2,2.9) [anchor=south] {$+$};
  \node at (2.6,2) [anchor=north] {$\sigma(i)$};
  \node at (2.4,1.2) [anchor=north] {$+$};
  \node at (5.5,0) [anchor=south] {$x_{i}$};
  \node at (5.5,1) [anchor=south] {$x_{i}$};
  \node at (5.5,2) [anchor=south] {$x_{i+1}$};
  \node at (5.5,3) [anchor=south] {$x_{i+1}$};
  \node at (2.7,0) [anchor=south] {$\Gamma$};
  \node at (2.7,1) [anchor=south] {$\Delta$};
  \node at (2.7,2) [anchor=south] {$\Gamma$};
  \node at (2.7,3) [anchor=south] {$\Delta$};
  \node at (6.1,0.5) [anchor=west] {$x_i$};
  \node at (6.1,2.5) [anchor=west] {$x_{i+1}$};
\end{tikzpicture}
\end{equation*} 
Hence the partition function of the configuration in (\ref{Eq3.1}) is 
\begin{equation}\label{Eq3.8}
    R_{\Delta\Delta}(+,+,+,+;x_i,x_{i+1})R_{\Delta\Gamma}(\sigma(i),+,\sigma(i),+;x_i,x_{i+1})f_{\mu}^{\sigma}(s_i\bm{x}).
\end{equation}

By Proposition \ref{YBE}, we can push the two $R$ vertices to the right without changing the partition function. The new configuration is shown as follows:
\begin{equation}\label{Eq3.2}
    \begin{tikzpicture}[baseline=(current bounding box.center)]
  \draw (-0.5,2) to (1,2);
  \draw (-0.5,1) to (1,1);
  \draw (-0.5,3) to (1,3);
  \draw (-0.5,0) to (1,0);
  \draw [dashed] (1,2) to (2,2);
  \draw [dashed] (1,1) to (2,1);
  \draw [dashed] (1,3) to (2,3);
  \draw [dashed] (1,0) to (2,0);
  \draw (2,2) to (2.5,2);
  \draw (2,1) to (2.5,1);
  \draw (2,3) to (2.5,3);
  \draw (2,0) to (2.5,0);
  \draw (0,-0.5) to (0,3.5);
  \draw (1,-0.5) to (1,3.5);
  \draw (2,-0.5) to (2,3.5);
  \filldraw[black] (0,0) circle (2pt);
  \filldraw[black] (1,0) circle (2pt);
  \filldraw[black] (2,0) circle (2pt);
  \filldraw[black] (0,1) circle (2pt);
  \filldraw[black] (1,1) circle (2pt);
  \filldraw[black] (2,1) circle (2pt);
  \filldraw[black] (0,2) circle (2pt);
  \filldraw[black] (1,2) circle (2pt);
  \filldraw[black] (2,2) circle (2pt);
  \filldraw[black] (0,3) circle (2pt);
  \filldraw[black] (1,3) circle (2pt);
  \filldraw[black] (2,3) circle (2pt);
  \node at (-0.5,0) [anchor=east] {$\sigma(i+1)$};
  \node at (-0.5,2) [anchor=east] {$+$};
  \node at (-0.5,1) [anchor=east] {$\sigma(i)$};
  \node at (-0.5,3) [anchor=east] {$+$};
  \node at (2.8,0) [anchor=south] {$x_{i}$};
  \node at (2.8,1) [anchor=south] {$x_{i+1}$};
  \node at (2.8,2) [anchor=south] {$x_{i+1}$};
  \node at (2.8,3) [anchor=south] {$x_{i}$};
  \node at (2.3,0) [anchor=south] {$\Gamma$};
  \node at (2.3,1) [anchor=south] {$\Gamma$};
  \node at (2.3,2) [anchor=south] {$\Delta$};
  \node at (2.3,3) [anchor=south] {$\Delta$};
  \draw (2.5,0) to (6.5,0);
  \draw (2.5,1) to (3.5,1) to [out=0, in=180] (6.5,2);
  \draw (2.5,2) to (3.5,2) to [out=0, in=180] (6.5,3);
  \draw (2.5,3) to (3.5,3) to [out=0, in=180] (6.5,1);
  \draw (6.5,2) arc(-90:90:0.5);
  \draw (6.5,0) arc(-90:90:0.5);
  \filldraw[black] (7,0.5) circle (2pt);
  \filldraw[black] (7,2.5) circle (2pt);
  \filldraw[black] (4.7,2.35) circle (2pt);
  \filldraw[black] (5.25,1.65) circle (2pt);
  \node at (7.1,0.5) [anchor=west] {$x_i$};
  \node at (7.1,2.5) [anchor=west] {$x_{i+1}$};
  \node at (5.3,1.6) [anchor=south] {$T$};
  \node at (4.8,2.4) [anchor=south] {$S$};
\end{tikzpicture}
\end{equation}

By Proposition \ref{Ref}, the partition function of the configuration in (\ref{Eq3.2}) is equal to the partition function of the configuration in (\ref{Eq3.3}), which by Proposition \ref{YBE} is equal to the partition function of the configuration in (\ref{Eq3.4}) (where $S'$ is $\Gamma-\Gamma$ vertex with spectral parameters $x_{i+1},x_i$ and $T'$ is $\Delta-\Gamma$ vertex with spectral parameters $x_{i+1},x_i$).
\begin{equation}\label{Eq3.3}
    \begin{tikzpicture}[baseline=(current bounding box.center)]
  \draw (-0.5,2) to (1,2);
  \draw (-0.5,1) to (1,1);
  \draw (-0.5,3) to (1,3);
  \draw (-0.5,0) to (1,0);
  \draw [dashed] (1,2) to (2,2);
  \draw [dashed] (1,1) to (2,1);
  \draw [dashed] (1,3) to (2,3);
  \draw [dashed] (1,0) to (2,0);
  \draw (2,2) to (2.5,2);
  \draw (2,1) to (2.5,1);
  \draw (2,3) to (2.5,3);
  \draw (2,0) to (2.5,0);
  \draw (0,-0.5) to (0,3.5);
  \draw (1,-0.5) to (1,3.5);
  \draw (2,-0.5) to (2,3.5);
  \filldraw[black] (0,0) circle (2pt);
  \filldraw[black] (1,0) circle (2pt);
  \filldraw[black] (2,0) circle (2pt);
  \filldraw[black] (0,1) circle (2pt);
  \filldraw[black] (1,1) circle (2pt);
  \filldraw[black] (2,1) circle (2pt);
  \filldraw[black] (0,2) circle (2pt);
  \filldraw[black] (1,2) circle (2pt);
  \filldraw[black] (2,2) circle (2pt);
  \filldraw[black] (0,3) circle (2pt);
  \filldraw[black] (1,3) circle (2pt);
  \filldraw[black] (2,3) circle (2pt);
  \node at (-0.5,0) [anchor=east] {$\sigma(i+1)$};
  \node at (-0.5,2) [anchor=east] {$+$};
  \node at (-0.5,1) [anchor=east] {$\sigma(i)$};
  \node at (-0.5,3) [anchor=east] {$+$};
  \node at (2.8,0) [anchor=south] {$x_{i}$};
  \node at (2.8,1) [anchor=south] {$x_{i+1}$};
  \node at (2.8,2) [anchor=south] {$x_{i+1}$};
  \node at (2.8,3) [anchor=south] {$x_{i}$};
  \node at (2.3,0) [anchor=south] {$\Gamma$};
  \node at (2.3,1) [anchor=south] {$\Gamma$};
  \node at (2.3,2) [anchor=south] {$\Delta$};
  \node at (2.3,3) [anchor=south] {$\Delta$};
 \draw (2.5,3) to (6.5,3);
  \draw (2.5,1) to (3.5,1) to [out=0, in=180] (6.5,0);
  \draw (2.5,2) to (3.5,2) to [out=0, in=180] (6.5,1);
  \draw (2.5,0) to (3.5,0) to [out=0, in=180] (6.5,2);
  \draw (6.5,2) arc(-90:90:0.5);
  \draw (6.5,0) arc(-90:90:0.5);
  \filldraw[black] (7,0.5) circle (2pt);
  \filldraw[black] (7,2.5) circle (2pt);
  \filldraw[black] (4.7,0.67) circle (2pt);
  \filldraw[black] (5.3,1.35) circle (2pt);
  \node at (7.1,0.5) [anchor=west] {$x_{i+1}$};
  \node at (7.1,2.5) [anchor=west] {$x_{i}$};
    \node at (5.3,1.35) [anchor=south] {$T'$};
  \node at (4.7,0.65) [anchor=south] {$S'$};
\end{tikzpicture}
\end{equation}
\begin{equation}\label{Eq3.4}
    \begin{tikzpicture}[baseline=(current bounding box.center)]
  \draw (0,0) to (0.6,0) to [out=0, in=180] (3,2) to (4,2);
  \draw (0,3) to (0.6,3) to [out=0, in=180] (3,1) to (4,1);
  \draw (2.5,3) to (4,3);
  \draw (0,2) to [out=0, in=180] (2.4,0) to (3,0) to (4,0);
  \draw [dashed] (4,2) to (5,2);
  \draw [dashed] (4,1) to (5,1);
  \draw [dashed] (4,3) to (5,3);
  \draw [dashed] (4,0) to (5,0);
  \draw (5,2) to (5.5,2);
  \draw (5,1) to (5.5,1);
  \draw (5,3) to (5.5,3);
  \draw (5,0) to (5.5,0);
  \draw (5.5,2) arc(-90:90:0.5);
  \draw (5.5,0) arc(-90:90:0.5);
  \draw (3,-0.5) to (3,3.5);
  \draw (4,-0.5) to (4,3.5);
  \draw (5,-0.5) to (5,3.5);
  \filldraw[black] (6,0.5) circle (2pt);
  \filldraw[black] (6,2.5) circle (2pt);
  \filldraw[black] (2.1,1.5) circle (2pt);
  \filldraw[black] (1.5,0.5) circle (2pt);
  \filldraw[black] (3,0) circle (2pt);
  \filldraw[black] (4,0) circle (2pt);
  \filldraw[black] (5,0) circle (2pt);
  \filldraw[black] (3,1) circle (2pt);
  \filldraw[black] (4,1) circle (2pt);
  \filldraw[black] (5,1) circle (2pt);
  \filldraw[black] (3,2) circle (2pt);
  \filldraw[black] (4,2) circle (2pt);
  \filldraw[black] (5,2) circle (2pt);
  \filldraw[black] (3,3) circle (2pt);
  \filldraw[black] (4,3) circle (2pt);
  \filldraw[black] (5,3) circle (2pt);
  \node at (2.1,1.5) [anchor=south] {$T'$};
  \node at (1.5,0.5) [anchor=south] {$S'$};
  \node at (0,0) [anchor=east] {$\sigma(i+1)$};
  \node at (2.5,3) [anchor=east] {$+$};
  \node at (0,2) [anchor=east] {$\sigma(i)$};
  \node at (0,3) [anchor=east] {$+$};
  \node at (5.5,0) [anchor=south] {$x_{i+1}$};
  \node at (5.5,1) [anchor=south] {$x_{i+1}$};
  \node at (5.5,2) [anchor=south] {$x_{i}$};
  \node at (5.5,3) [anchor=south] {$x_{i}$};
  \node at (2.7,0) [anchor=south] {$\Gamma$};
  \node at (2.7,1) [anchor=south] {$\Delta$};
  \node at (2.7,2) [anchor=south] {$\Gamma$};
  \node at (2.7,3) [anchor=south] {$\Delta$};
  \node at (6.1,0.5) [anchor=west] {$x_{i+1}$};
  \node at (6.1,2.5) [anchor=west] {$x_{i}$};
\end{tikzpicture}
\end{equation}
By considering possible configurations for $S',T'$, we obtain that the partition function of the configuration in (\ref{Eq3.4}) is
\begin{eqnarray}\label{Eq3.5}
  &&  R_{\Gamma\Gamma}(\sigma(i+1),\sigma(i),\sigma(i),\sigma(i+1);x_{i+1},x_i)R_{\Delta\Gamma}(\sigma(i),+,\sigma(i),+;x_{i+1},x_i) f_{\mu}^{\sigma}(\bm{x})\nonumber\\
  &&+R_{\Gamma\Gamma}(\sigma(i+1),\sigma(i),\sigma(i+1),\sigma(i);x_{i+1},x_i)R_{\Delta\Gamma}(\sigma(i+1),+,\sigma(i+1),+;x_{i+1},x_i)f_{\mu}^{\sigma s_i}(\bm{x}).
\end{eqnarray}

Comparing (\ref{Eq3.8}) and (\ref{Eq3.5}), we obtain that 
\begin{eqnarray*}
  &&  R_{\Delta\Delta}(+,+,+,+;x_i,x_{i+1})R_{\Delta\Gamma}(\sigma(i),+,\sigma(i),+;x_i,x_{i+1})f_{\mu}^{\sigma}(s_i\bm{x}) \nonumber\\
  &=& R_{\Gamma\Gamma}(\sigma(i+1),\sigma(i),\sigma(i),\sigma(i+1);x_{i+1},x_i)R_{\Delta\Gamma}(\sigma(i),+,\sigma(i),+;x_{i+1},x_i) f_{\mu}^{\sigma}(\bm{x})\nonumber\\
  &&+R_{\Gamma\Gamma}(\sigma(i+1),\sigma(i),\sigma(i+1),\sigma(i);x_{i+1},x_i)R_{\Delta\Gamma}(\sigma(i+1),+,\sigma(i+1),+;x_{i+1},x_i)f_{\mu}^{\sigma s_i}(\bm{x}),
\end{eqnarray*}
which by the Boltzmann weights in Section \ref{Sect.2.2} leads to the conclusion.

\end{proof}

\begin{proof}[Proof of Theorem \ref{Thm3.3}]

We denote $R:=\sigma(n)$, and consider the configuration that corresponds to $f_{\mu}^{\sigma}(\bm{x})$. Note that all the vertical edges at the bottom of the rectangular lattice carry the vector $\bm{e}_0$. Hence only the three states in Figure \ref{FishC} can appear for $\Gamma$ vertices in the $2n$th row. We also note that only the three states in Figure \ref{FishC2} can appear for the cap connecting the last two rows.

\begin{figure}[!h]
\[
\begin{array}{|c|c|c|c|c|c|}
\hline
\gammaicen{\bm{e}_0}{+}{\bm{e}_0}{+} &
\gammaicen{\bm{e}_0}{R}{\bm{e}_0}{R} &
\gammaicen{\bm{e}_0}{R}{\bm{e}_R}{+}\\
\hline
   1 & \frac{s-x_n}{s(1-sx_n)} & \frac{(1-s^2)x_n}{s(1-sx_n)} \\
\hline\end{array}\]
\caption{Boltzmann weights involved in the $2n$th row}
\label{FishC}
\end{figure}

\begin{figure}[!h]
\[
\begin{array}{|c|c|c|c|c|c|}
\hline
\text{Cap} &\capsN{+}{+} & \capsN{R}{\overline{R}} & \capsN{R}{R} \\
\hline
\text{Boltzmann weight}  &  1 & t\phi(q^{-1\slash 2}x_n^{-1}) & 1-t\phi(q^{-1\slash 2}x_n^{-1}) \\
\hline\end{array}\]
\caption{Boltzmann weights involved for the cap connecting the last two rows}
\label{FishC2}
\end{figure}

For each admissible state, we change the color in the $2n$th row from $+$ to $R$ and from $R$ to $+$. We also change the Boltzmann weights for the vertices in the $2n$th row into the Boltzmann weights of a $\Delta$ vertex with spectral parameter $x_n^{-1}$, and change the Boltzmann weights for the cap connecting the last two rows into the Boltzmann weights in Figure \ref{FishC4}. Note that only the three states listed in Figure \ref{FishCn} are involved in the $2n$th row for the new configuration. Let $Z_1$ be the partition function of the new configuration. We have
\begin{equation}\label{Eq3.15}
    f_{\mu}^{\sigma}(\bm{x})=\Big(\frac{s-x_n}{s(1-sx_n)}\Big)^LZ_1.
\end{equation}

\begin{figure}[!h]
\[
\begin{array}{|c|c|c|c|c|c|}
\hline
\text{New cap} &\newcaps{+}{R} & \newcaps{\overline{R}}{+} & \newcaps{R}{+} \\
\hline
\text{Boltzmann weight}  &  -1 & t\phi(q^{-1\slash 2}x_n^{-1}) &  1-t\phi(q^{-1\slash 2}x_n^{-1})  \\
\hline\end{array}\]
\caption{New Boltzmann weights for the cap connecting the last two rows}
\label{FishC4}
\end{figure}

\begin{figure}[!h]
\[
\begin{array}{|c|c|c|c|c|c|}
\hline
\gammaicenn{\bm{e}_0}{R}{\bm{e}_0}{R} &
\gammaicenn{\bm{e}_0}{+}{\bm{e}_0}{+} &
\gammaicenn{\bm{e}_0}{+}{\bm{e}_R}{R}\\
\hline
   \frac{s(1-sx_n)}{s-x_n} & 1 & -\frac{(1-s^2)x_n}{s-x_n} \\
\hline\end{array}\]
\caption{New Boltzmann weights involved in the $2n$th row}
\label{FishCn}
\end{figure}

Now we attach a $\Delta-\Delta$ vertex with spectral parameters $x_n^{-1},x_n$ to the left of the last two rows of the new configuration:
\begin{equation}\label{Eq3.6}
    \begin{tikzpicture}[baseline=(current bounding box.center)]
  \draw (0,0) to [out=0, in=-150] (1,0.5) to [out=30, in=180] (2,1);
  \draw (0,1) to [out=0, in=150] (1,0.5) to [out=-30, in=180] (2,0);
  \draw (2,0) to (3.5,0);
  \draw  [dashed] (3.5,0) to (4.5,0);
  \draw (4.5,0) to (5.5,0);
  \draw (2,1) to (3.5,1);
  \draw  [dashed] (3.5,1) to (4.5,1);
  \draw (4.5,1) to (5.5,1);
  \draw (5.5,0) to [out = 0, in = 180] (6,0.5);
  \draw (5.5,1) to [out = 0, in = 180] (6,0.5);
  \draw (2.5,-0.5) to (2.5,1.5);
  \draw (3.5,-0.5) to (3.5,1.5);
  \draw (4.5,-0.5) to (4.5,1.5);
  \filldraw[black] (2.5,0) circle (2pt);
  \filldraw[black] (3.5,0) circle (2pt);
  \filldraw[black] (4.5,0) circle (2pt);
  \filldraw[black] (2.5,1) circle (2pt);
  \filldraw[black] (3.5,1) circle (2pt);
  \filldraw[black] (4.5,1) circle (2pt);
  \filldraw[black] (6,0.5) circle (2pt);
  \node at (0,0) [anchor=east] {$+$};
  \node at (0,1) [anchor=east] {$+$};
  \filldraw[black] (1,0.5) circle (2pt);
  \node at (5,1) [anchor=south] {$\Delta$};
  \node at (5,0) [anchor=north] {$\Delta$};
  \node at (5.5,1) [anchor=south] {$x_n$};
  \node at (5.5,0) [anchor=north] {$x_n^{-1}$};
  \node at (0,0) [anchor=east] {$+$};
 \node at (6,0.5) [anchor=west] {$x_n$};
\end{tikzpicture}
\end{equation}
Note that the only admissible configuration for the $\Delta-\Delta$ vertex is given by
\begin{equation*}
    \begin{tikzpicture}[baseline=(current bounding box.center)]
  \draw (0,0) to [out=0, in=-150] (1,0.5) to [out=30, in=180] (2,1);
  \draw (0,1) to [out=0, in=150] (1,0.5) to [out=-30, in=180] (2,0);
  \draw (2,0) to (3.5,0);
  \draw  [dashed] (3.5,0) to (4.5,0);
  \draw (4.5,0) to (5.5,0);
  \draw (2,1) to (3.5,1);
  \draw  [dashed] (3.5,1) to (4.5,1);
  \draw (4.5,1) to (5.5,1);
  \draw (5.5,0) to [out = 0, in = 180] (6,0.5);
  \draw (5.5,1) to [out = 0, in = 180] (6,0.5);
  \draw (2.5,-0.5) to (2.5,1.5);
  \draw (3.5,-0.5) to (3.5,1.5);
  \draw (4.5,-0.5) to (4.5,1.5);
  \filldraw[black] (2.5,0) circle (2pt);
  \filldraw[black] (3.5,0) circle (2pt);
  \filldraw[black] (4.5,0) circle (2pt);
  \filldraw[black] (2.5,1) circle (2pt);
  \filldraw[black] (3.5,1) circle (2pt);
  \filldraw[black] (4.5,1) circle (2pt);
  \filldraw[black] (6,0.5) circle (2pt);
  \node at (0,0) [anchor=east] {$+$};
  \node at (0,1) [anchor=east] {$+$};
  \filldraw[black] (1,0.5) circle (2pt);
  \node at (5,1) [anchor=south] {$\Delta$};
  \node at (5,0) [anchor=north] {$\Delta$};
  \node at (5.5,1) [anchor=south] {$x_n$};
  \node at (5.5,0) [anchor=north] {$x_n^{-1}$};
  \node at (0,0) [anchor=east] {$+$};
  \node at (0,0) [anchor=east] {$+$};
  \node at (2,0) [anchor=north] {$+$};
  \node at (2,1) [anchor=south] {$+$};
  \node at (6,0.5) [anchor=west] {$x_n$};
\end{tikzpicture}
\end{equation*}
Hence the partition function of the configuration in (\ref{Eq3.6}) is $Z_1$. By Proposition \ref{YBE}, we can push the $\Delta-\Delta$ vertex to the right without changing the partition function. Hence $Z_1$ is equal to the partition function of the following configuration:
\begin{equation}
\begin{tikzpicture}[baseline=(current bounding box.center)]
\draw (3.5,0) to [out=0, in=-150] (4.5,0.5) to [out=30, in=180] (5.5,1);
 \draw (3.5,1) to [out=0, in=150] (4.5,0.5) to [out=-30, in=180] (5.5,0);
  \draw (0,0) to (1.5,0);
  \draw  [dashed] (1.5,0) to (2.5,0);
  \draw (2.5,0) to (3.5,0);
  \draw (0,1) to (1.5,1);
  \draw  [dashed] (1.5,1) to (2.5,1);
  \draw (2.5,1) to (3.5,1);
  \draw (5.5,0) to [out = 0, in = 180] (6,0.5);
  \draw (5.5,1) to [out = 0, in = 180] (6,0.5);
  \draw (0.5,-0.5) to (0.5,1.5);
  \draw (1.5,-0.5) to (1.5,1.5);
  \draw (2.5,-0.5) to (2.5,1.5);
  \filldraw[black] (0.5,0) circle (2pt);
  \filldraw[black] (1.5,0) circle (2pt);
  \filldraw[black] (2.5,0) circle (2pt);
  \filldraw[black] (0.5,1) circle (2pt);
  \filldraw[black] (1.5,1) circle (2pt);
  \filldraw[black] (2.5,1) circle (2pt);
  \filldraw[black] (6,0.5) circle (2pt);
  \node at (0,0) [anchor=east] {$+$};
  \node at (0,1) [anchor=east] {$+$};
  \filldraw[black] (4.5,0.5) circle (2pt);
  \node at (3,1) [anchor=south] {$\Delta$};
  \node at (3,0) [anchor=north] {$\Delta$};
  \node at (3.5,1) [anchor=south] {$x_n^{-1}$};
  \node at (3.5,0) [anchor=north] {$x_n$};
  \node at (0,0) [anchor=east] {$+$};
  \node at (6,0.5) [anchor=west] {$x_n$};
\end{tikzpicture}
\end{equation}

Now we introduce two auxiliary cap vertices $C_1$ and $C_2$. The Boltzmann weights for $C_1$ and $C_2$ are given in Figures  \ref{FishC5} and \ref{FishC6}, respectively.

\begin{figure}[!h]
\[
\begin{array}{|c|c|c|c|c|c|}
\hline
\text{New cap} &\capsC{+}{R}{C_1} & \capsC{\overline{R}}{+}{C_1} & \capsC{R}{+}{C_1} \\
\hline
\text{Boltzmann weight}  &  -1  & t\phi(q^{-1\slash 2}x_n) &  1-t\phi(q^{-1\slash 2}x_n) \\
\hline\end{array}\]
\caption{Boltzmann weights for the cap vertex $C_1$}
\label{FishC5}
\end{figure}

\begin{figure}[!h]
\[
\begin{array}{|c|c|c|c|c|c|}
\hline
\text{New cap} &\capsC{+}{\overline{R}}{C_2} & \capsC{\overline{R}}{+}{C_2} & \capsC{R}{+}{C_2} \\
\hline
\text{Boltzmann weight}  &  -1 & 1-\phi(q^{-1\slash 2}x_n) & \phi(q^{-1\slash 2}x_n) \\
\hline\end{array}\]
\caption{Boltzmann weights for the cap vertex $C_2$}
\label{FishC6}
\end{figure}

For any $\epsilon_1,\epsilon_2\in \{0\}\cup [\pm n]$, let $W(\epsilon_1,\epsilon_2), W_1(\epsilon_1,\epsilon_2), W_2(\epsilon_1,\epsilon_2)$ be the partition functions of the configurations in (\ref{Eq3.9})-(\ref{Eq3.11}), respectively (where the $R$ vertex in (\ref{Eq3.9}) is a $\Delta-\Delta$ vertex with spectral parameters $x_n^{-1},x_n$).
\begin{equation}\label{Eq3.9}
\begin{tikzpicture}[baseline=(current bounding box.center)]
  \draw (0,0) to [out=0, in=-150] (1,0.5) to [out=30, in=180] (2,1);
  \draw (0,1) to [out=0, in=150] (1,0.5) to [out=-30, in=180] (2,0);
  \draw (2,0) to [out = 0, in = 180] (2.5,0.5);
  \draw (2,1) to [out = 0, in = 180] (2.5,0.5);
  \filldraw[black] (2.5,0.5) circle (2pt);
  \node at (0,0) [anchor=east] {$\epsilon_1$};
  \node at (0,1) [anchor=east] {$\epsilon_2$};
  \node at (2.5,0.5) [anchor=west] {$x_n$};
  \filldraw[black] (1,0.5) circle (2pt);
\end{tikzpicture}
\end{equation}
\begin{equation}\label{Eq3.10}
\begin{tikzpicture}[baseline=(current bounding box.center)]
  \draw (0,0) to [out = 0, in = 180] (0.5,0.5);
  \draw[<-] (0,1) to [out = 0, in = 180] (0.5,0.5);
  \filldraw[black] (0.5,0.5) circle (2pt);
  \node at (0,0) [anchor=east] {$\epsilon_1$};
  \node at (0,1) [anchor=east] {$\epsilon_2$};
  \node at (0.5,0.5) [anchor=west] {$C_1$};
\end{tikzpicture}
\end{equation}
\begin{equation}\label{Eq3.11}
\begin{tikzpicture}[baseline=(current bounding box.center)]
  \draw (0,0) to [out = 0, in = 180] (0.5,0.5);
  \draw[<-] (0,1) to [out = 0, in = 180] (0.5,0.5);
  \filldraw[black] (0.5,0.5) circle (2pt);
  \node at (0,0) [anchor=east] {$\epsilon_1$};
  \node at (0,1) [anchor=east] {$\epsilon_2$};
   \node at (0.5,0.5) [anchor=west] {$C_2$};
\end{tikzpicture}
\end{equation}
It can be checked that for any $\epsilon_1,\epsilon_2\in \{0\}\cup [\pm n]$, 
\begin{eqnarray}\label{Eq3.12}
    W(\epsilon_1,\epsilon_2)&=&\Big(\frac{(1-q)x_n^2}{x_n^2-q}-\frac{x_n^2-1}{x_n^2-q}(1-t\phi(q^{-1\slash 2}x_n^{-1}))\Big)W_1(\epsilon_1,\epsilon_2) \nonumber\\
    && -\frac{q(x_n^2-1)t\phi(q^{-1\slash 2}x_n^{-1})}{x_n^2-q} W_2(\epsilon_1,\epsilon_2).
\end{eqnarray}

We denote by $Z_2,Z_2'$ the partition functions of the following two configurations (where we only show the last two rows of the lattice). 
\begin{equation}\label{Eq3.13}
\begin{tikzpicture}[baseline=(current bounding box.center)]
  \draw (0,0) to (1.5,0);
  \draw  [dashed] (1.5,0) to (2.5,0);
  \draw (2.5,0) to (3.5,0);
  \draw (0,1) to (1.5,1);
  \draw  [dashed] (1.5,1) to (2.5,1);
  \draw (2.5,1) to (3.5,1);
  \draw (3.5,0) to [out = 0, in = 180] (4,0.5);
  \draw (3.5,1) to [out = 0, in = 180] (4,0.5);
  \draw (0.5,-0.5) to (0.5,1.5);
  \draw (1.5,-0.5) to (1.5,1.5);
  \draw (2.5,-0.5) to (2.5,1.5);
  \filldraw[black] (0.5,0) circle (2pt);
  \filldraw[black] (1.5,0) circle (2pt);
  \filldraw[black] (2.5,0) circle (2pt);
  \filldraw[black] (0.5,1) circle (2pt);
  \filldraw[black] (1.5,1) circle (2pt);
  \filldraw[black] (2.5,1) circle (2pt);
  \filldraw[black] (4,0.5) circle (2pt);
  \node at (0,0) [anchor=east] {$+$};
  \node at (0,1) [anchor=east] {$+$};
  \node at (3,1) [anchor=south] {$\Delta$};
  \node at (3,0) [anchor=north] {$\Delta$};
  \node at (3.5,1) [anchor=south] {$x_n^{-1}$};
  \node at (3.5,0) [anchor=north] {$x_n$};
  \node at (0,0) [anchor=east] {$+$};
  \node at (4,0.5) [anchor=west] {$C_1$};
\end{tikzpicture}
\end{equation}
\begin{equation}\label{Eq3.14}
\begin{tikzpicture}[baseline=(current bounding box.center)]
  \draw (0,0) to (1.5,0);
  \draw  [dashed] (1.5,0) to (2.5,0);
  \draw (2.5,0) to (3.5,0);
  \draw (0,1) to (1.5,1);
  \draw  [dashed] (1.5,1) to (2.5,1);
  \draw (2.5,1) to (3.5,1);
  \draw (3.5,0) to [out = 0, in = 180] (4,0.5);
  \draw (3.5,1) to [out = 0, in = 180] (4,0.5);
  \draw (0.5,-0.5) to (0.5,1.5);
  \draw (1.5,-0.5) to (1.5,1.5);
  \draw (2.5,-0.5) to (2.5,1.5);
  \filldraw[black] (0.5,0) circle (2pt);
  \filldraw[black] (1.5,0) circle (2pt);
  \filldraw[black] (2.5,0) circle (2pt);
  \filldraw[black] (0.5,1) circle (2pt);
  \filldraw[black] (1.5,1) circle (2pt);
  \filldraw[black] (2.5,1) circle (2pt);
  \filldraw[black] (4,0.5) circle (2pt);
  \node at (0,0) [anchor=east] {$+$};
  \node at (0,1) [anchor=east] {$+$};
  \node at (3,1) [anchor=south] {$\Delta$};
  \node at (3,0) [anchor=north] {$\Delta$};
  \node at (3.5,1) [anchor=south] {$x_n^{-1}$};
  \node at (3.5,0) [anchor=north] {$x_n$};
  \node at (0,0) [anchor=east] {$+$};
  \node at (4,0.5) [anchor=west] {$C_2$};
\end{tikzpicture}
\end{equation}
By (\ref{Eq3.12}), we have
\begin{equation}\label{Eq3.16}
    Z_1=\Big(\frac{(1-q)x_n^2}{x_n^2-q}-\frac{x_n^2-1}{x_n^2-q}(1-t\phi(q^{-1\slash 2}x_n^{-1}))\Big)Z_2-\frac{q(x_n^2-1)t\phi(q^{-1\slash 2}x_n^{-1})}{x_n^2-q} Z_2'.
\end{equation}
Note that only the three states in Figure \ref{FishCm} can appear in the $2n$th row for the configuration in (\ref{Eq3.13}), and only the three states in Figure \ref{FishCmm} can appear in the $2n$th row for the configuration in (\ref{Eq3.14}).

\begin{figure}[!h]
\[
\begin{array}{|c|c|c|c|c|c|}
\hline
\gammaicenm{\bm{e}_0}{R}{\bm{e}_0}{R} &
\gammaicenm{\bm{e}_0}{+}{\bm{e}_0}{+} &
\gammaicenm{\bm{e}_0}{+}{\bm{e}_R}{R}\\
\hline
   \frac{s(s-x_n)}{1-s x_n} & 1 & \frac{1-s^2}{1-sx_n} \\
\hline\end{array}\]
\caption{Boltzmann weights involved in the $2n$th row for the configuration in (\ref{Eq3.13})}
\label{FishCm}
\end{figure}

\begin{figure}[!h]
\[
\begin{array}{|c|c|c|c|c|c|}
\hline
\gammaicenm{\bm{e}_0}{\overline{R}}{\bm{e}_0}{\overline{R}} &
\gammaicenm{\bm{e}_0}{+}{\bm{e}_0}{+} &
\gammaicenm{\bm{e}_0}{+}{\bm{e}_{\overline{R}}}{\overline{R}}\\
\hline
   \frac{s-x_n}{s(1-sx_n)} & 1 & \frac{(1-s^2)x_n}{s(1-sx_n)} \\
\hline\end{array}\]
\caption{Boltzmann weights involved in the $2n$th row for the configuration in (\ref{Eq3.14})}
\label{FishCmm}
\end{figure}

We compute $Z_2$ as follows. For the configuration in (\ref{Eq3.13}), we change the color in the $2n$th row from $+$ to $R$ and from $R$ to $+$ for each admissible state. We also change the Boltzmann weights for the vertices in the $2n$th row into the Boltzmann weights of a $\Gamma$ vertex with spectral parameter $x_n^{-1}$, and change the Boltzmann weights for $C_1$ into the Boltzmann weights of a cap vertex with spectral parameter $x_n^{-1}$. Note that for the changed configuration, only the three states listed in Figure \ref{Fishn1} are involved in the $2n$th row, and only the three states listed in Figure \ref{Fishn2} are involved in the cap connecting the last two rows. The partition function of the changed configuration is $f_{\mu}^{\sigma}(s_n\bm{x})$. Hence 
\begin{equation}\label{Eq3.17}
    Z_2=\Big(\frac{s(s-x_n)}{1-sx_n}\Big)^L f_{\mu}^{\sigma}(s_n\bm{x}).
\end{equation}

\begin{figure}[!h]
\[
\begin{array}{|c|c|c|c|c|c|}
\hline
\gammaicenn{\bm{e}_0}{+}{\bm{e}_0}{+} &
\gammaicenn{\bm{e}_0}{R}{\bm{e}_0}{R} &
\gammaicenn{\bm{e}_0}{R}{\bm{e}_R}{+}\\
\hline
   1 & \frac{1-sx_n}{s(s-x_n)} & -\frac{1-s^2}{s(s-x_n)} \\
\hline\end{array}\]
\caption{Boltzmann weights involved in the $2n$th row}
\label{Fishn1}
\end{figure}

\begin{figure}[!h]
\[
\begin{array}{|c|c|c|c|c|c|}
\hline
\text{Cap} &\capsm{+}{+} & \capsm{R}{\overline{R}} & \capsm{R}{R} \\
\hline
\text{Boltzmann weight}  &  1 & t\phi(q^{-1\slash 2}x_n) & 1-t\phi(q^{-1\slash 2}x_n) \\
\hline\end{array}\]
\caption{Boltzmann weights involved for the cap connecting the last two rows}
\label{Fishn2}
\end{figure}

We compute $Z_2'$ as follows. For the configuration in (\ref{Eq3.14}), we change the color in the $2n$th row from $+$ to $\overline{R}$ and from $\overline{R}$ to $+$ for each admissible state. We also change the Boltzmann weights for the vertices in the $2n$th row into the Boltzmann weights of a $\Gamma$ vertex with spectral parameter $x_n^{-1}$, and change the Boltzmann weights for $C_2$ into the Boltzmann weights of a cap vertex with spectral parameter $x_n^{-1}$. Note that for the changed configuration, only the three states listed in Figure \ref{Fishn3} are involved in the $2n$th row, and only the three states listed in Figure \ref{Fishn4} are involved in the cap connecting the last two rows. The partition function of the changed configuration is $f_{\mu}^{\sigma s_n}(s_n\bm{x})$. Hence 
\begin{equation}\label{Eq3.18}
    Z_2'=\Big(\frac{s-x_n}{s(1-sx_n)}\Big)^L f_{\mu}^{\sigma s_n}(s_n\bm{x}).
\end{equation}

\begin{figure}[!h]
\[
\begin{array}{|c|c|c|c|c|c|}
\hline
\gammaicenn{\bm{e}_0}{+}{\bm{e}_0}{+} &
\gammaicenn{\bm{e}_0}{\overline{R}}{\bm{e}_0}{\overline{R}} &
\gammaicenn{\bm{e}_0}{\overline{R}}{\bm{e}_{\overline{R}}}{+}\\
\hline
   1 & \frac{s(1-sx_n)}{s-x_n} & -\frac{(1-s^2)x_n}{s-x_n} \\
\hline\end{array}\]
\caption{Boltzmann weights involved in the $2n$th row}
\label{Fishn3}
\end{figure}

\begin{figure}[!h]
\[
\begin{array}{|c|c|c|c|c|c|}
\hline
\text{Cap} &\capsm{+}{+} & \capsm{\overline{R}}{R} & \capsm{\overline{R}}{\overline{R}} \\
\hline
\text{Boltzmann weight}  &  1 & \phi(q^{-1\slash 2}x_n) & 1-\phi(q^{-1\slash 2}x_n) \\
\hline\end{array}\]
\caption{Boltzmann weights involved for the cap connecting the last two rows}
\label{Fishn4}
\end{figure}

By (\ref{Eq3.15}) and (\ref{Eq3.16})-(\ref{Eq3.18}), we conclude that
\begin{eqnarray}
  &&  q s^{-2L}\Big(\frac{1-sx_n}{s-x_n}\Big)^L f_{\mu}^{\sigma s_n}(\bm{x})\nonumber\\
  &=& \frac{(\sqrt{q}-\nu t x_n)(\sqrt{q}+\nu^{-1}x_n)}{t(1-x_n^2)}\Big(\frac{1-qx_n^2}{x_n^2-q}\Big(\frac{s-x_n}{1-sx_n}\Big)^L f_{\mu}^{\sigma}(s_n\bm{x})-\Big(\frac{1-sx_n}{s-x_n}\Big)^Lf_{\mu}^{\sigma}(\bm{x})\Big)\nonumber\\
  &&+\Big(\frac{1-sx_n}{s-x_n}\Big)^L f_{\mu}^{\sigma}(\bm{x}).
\end{eqnarray}
    
\end{proof}

\newpage
\bibliographystyle{acm}
\bibliography{Vertexmodel.bib}

\begin{thebibliography}{10}

\bibitem{Bax2}
{\sc Baxter, R.~J.}
\newblock {\em Exactly solved models in statistical mechanics}.
\newblock Academic Press, Inc. [Harcourt Brace Jovanovich, Publishers], London, 1982.

\bibitem{Bax1}
{\sc Baxter, R.~J.}
\newblock The inversion relation method for some two-dimensional exactly solved models in lattice statistics.
\newblock {\em J. Statist. Phys. 28}, 1 (1982), 1--41.

\bibitem{BPZ}
{\sc Belavin, A.~A., Polyakov, A.~M., and Zamolodchikov, A.~B.}
\newblock Infinite conformal symmetry of critical fluctuations in two dimensions.
\newblock {\em J. Statist. Phys. 34}, 5-6 (1984), 763--774.

\bibitem{Bor}
{\sc Borodin, A.}
\newblock On a family of symmetric rational functions.
\newblock {\em Adv. Math. 306\/} (2017), 973--1018.

\bibitem{MR4260463}
{\sc Borodin, A., and Bufetov, A.}
\newblock Color-position symmetry in interacting particle systems.
\newblock {\em Ann. Probab. 49}, 4 (2021), 1607--1632.

\bibitem{MR3466163}
{\sc Borodin, A., Corwin, I., and Gorin, V.}
\newblock Stochastic six-vertex model.
\newblock {\em Duke Math. J. 165}, 3 (2016), 563--624.

\bibitem{MR4400934}
{\sc Borodin, A., Gorin, V., and Wheeler, M.}
\newblock Shift-invariance for vertex models and polymers.
\newblock {\em Proc. Lond. Math. Soc. (3) 124}, 2 (2022), 182--299.

\bibitem{BP}
{\sc Borodin, A., and Petrov, L.}
\newblock Higher spin six vertex model and symmetric rational functions.
\newblock {\em Selecta Math. (N.S.) 24}, 2 (2018), 751--874.

\bibitem{BW}
{\sc Borodin, A., and Wheeler, M.}
\newblock Coloured stochastic vertex models and their spectral theory.
\newblock {\em arXiv preprint arXiv:1808.01866\/} (2018).

\bibitem{MR4408007}
{\sc Borodin, A., and Wheeler, M.}
\newblock Observables of coloured stochastic vertex models and their polymer limits.
\newblock {\em Probab. Math. Phys. 1}, 1 (2020), 205--265.

\bibitem{MR3584389}
{\sc Bosnjak, G., and Mangazeev, V.~V.}
\newblock Construction of {$R$}-matrices for symmetric tensor representations related to {$U_q(\widehat{sl_n})$}.
\newblock {\em J. Phys. A 49}, 49 (2016), 495204, 19.

\bibitem{BBBG}
{\sc Brubaker, B., Buciumas, V., Bump, D., and Gustafsson, H. P.~A.}
\newblock Colored five-vertex models and {D}emazure atoms.
\newblock {\em J. Combin. Theory Ser. A 178\/} (2021), Paper No. 105354, 48.

\bibitem{BSW}
{\sc Buciumas, V., Scrimshaw, T., and Weber, K.}
\newblock Colored five-vertex models and {L}ascoux polynomials and atoms.
\newblock {\em J. Lond. Math. Soc. (2) 102}, 3 (2020), 1047--1066.

\bibitem{MR4299137}
{\sc Bufetov, A., and Korotkikh, S.}
\newblock Observables of stochastic colored vertex models and local relation.
\newblock {\em Comm. Math. Phys. 386}, 3 (2021), 1881--1936.

\bibitem{MR4283759}
{\sc Bufetov, A., Mucciconi, M., and Petrov, L.}
\newblock Yang-{B}axter random fields and stochastic vertex models.
\newblock {\em Adv. Math. 388\/} (2021), Paper No. 107865, 94.

\bibitem{CP}
{\sc Corwin, I., and Petrov, L.}
\newblock Stochastic higher spin vertex models on the line.
\newblock {\em Comm. Math. Phys. 343}, 2 (2016), 651--700.

\bibitem{DMS}
{\sc Di~Francesco, P., Mathieu, P., and S\'{e}n\'{e}chal, D.}
\newblock {\em Conformal field theory}.
\newblock Graduate Texts in Contemporary Physics. Springer-Verlag, New York, 1997.

\bibitem{MR4317703}
{\sc Galashin, P.}
\newblock Symmetries of stochastic colored vertex models.
\newblock {\em Ann. Probab. 49}, 5 (2021), 2175--2219.

\bibitem{MR1147356}
{\sc Gwa, L.-H., and Spohn, H.}
\newblock Six-vertex model, roughened surfaces, and an asymmetric spin {H}amiltonian.
\newblock {\em Phys. Rev. Lett. 68}, 6 (1992), 725--728.

\bibitem{he2022shift}
{\sc He, J.}
\newblock Shift invariance of half space integrable models.
\newblock {\em arXiv preprint arXiv:2205.13029\/} (2022).

\bibitem{MR4126935}
{\sc Imamura, T., Mucciconi, M., and Sasamoto, T.}
\newblock Stationary stochastic {H}igher {S}pin {S}ix {V}ertex {M}odel and {$q$}-{W}hittaker measure.
\newblock {\em Probab. Theory Related Fields 177}, 3-4 (2020), 923--1042.

\bibitem{kuniba2016stochastic}
{\sc Kuniba, A., Mangazeev, V.~V., Maruyama, S., and Okado, M.}
\newblock Stochastic {$R$} matrix for {$U_q(A_n^{(1)})$}.
\newblock {\em Nuclear Physics B 913\/} (2016), 248--277.

\bibitem{Kup}
{\sc Kuperberg, G.}
\newblock Another proof of the alternating-sign matrix conjecture.
\newblock {\em Internat. Math. Res. Notices}, 3 (1996), 139--150.

\bibitem{Kup2}
{\sc Kuperberg, G.}
\newblock Symmetry classes of alternating-sign matrices under one roof.
\newblock {\em Ann. of Math. (2) 156}, 3 (2002), 835--866.

\bibitem{OP}
{\sc Orr, D., and Petrov, L.}
\newblock Stochastic higher spin six vertex model and {$q$}-{TASEP}s.
\newblock {\em Adv. Math. 317\/} (2017), 473--525.

\bibitem{MR1715325}
{\sc Sahi, S.}
\newblock Nonsymmetric {K}oornwinder polynomials and duality.
\newblock {\em Ann. of Math. (2) 150}, 1 (1999), 267--282.

\bibitem{MR3369560}
{\sc Venkateswaran, V.}
\newblock Symmetric and nonsymmetric {K}oornwinder polynomials in the {$q\rightarrow 0$} limit.
\newblock {\em J. Algebraic Combin. 42}, 2 (2015), 331--364.

\bibitem{WZ}
{\sc Wheeler, M., and Zinn-Justin, P.}
\newblock Littlewood-{R}ichardson coefficients for {G}rothendieck polynomials from integrability.
\newblock {\em J. Reine Angew. Math. 757\/} (2019), 159--195.

\end{thebibliography}

\end{document}